\newcommand{\xRrightarrow}[2][]{\ext@arrow 0359\Rrightarrowfill@{#1}{#2}}
\newcommand{\Rrightarrowfill@}{\arrowfill@\equiv\equiv\Rrightarrow}
\newcommand{\xLleftarrow}[2][]{\ext@arrow 3095\Lleftarrowfill@{#1}{#2}}
\newcommand{\Lleftarrowfill@}{\arrowfill@\Lleftarrow\equiv\equiv}
\numberwithin{equation}{subsection}
\newtheorem{prop}[subsection]{Proposition}
\newtheorem{thm}[subsection]{Theorem}
\newtheorem*{thm*}{Theorem}
\newtheorem*{lem*}{Lemma}
\newtheorem{cor}[subsection]{Corollary}
\theoremstyle{definition}
\theoremstyle{remark}
\newtheorem*{assump*}{Assumption}
\theoremstyle{definition}
\newtheorem{definition}[subsubsection]{Definition}
\theoremstyle{remark}
\newtheorem{remark}[subsubsection]{Remark}
\newcommand{\C}{\ensuremath{\mathbb{C}}}
\newcommand{\ad}{\ensuremath{\mathrm{ad}}}
\newcommand{\supp}{\ensuremath{\mathrm{supp}}}
\newcommand{\lps}{[\![}
\newcommand{\rps}{]\!]}
\newcommand{\llps}{(\!(}
\newcommand{\lrps}{)\!)}
\renewcommand{\O}{\ensuremath{\mathcal{O}}}
\newcommand{\G}{\ensuremath{\mathcal{G}}}
\newcommand{\Res}{\ensuremath{\mathrm{Res}}}
\newcommand{\SL}{\ensuremath{\mathrm{SL}}}
\newcommand{\SU}{\ensuremath{\mathrm{SU}}}
\newcommand{\Gr}{\ensuremath{\mathrm{Gr}}}
\newcommand{\Fl}{\ensuremath{\mathrm{Fl}}}
\newcommand{\fka}{\ensuremath{\mathfrak{a}}}
\newcommand{\fkf}{\ensuremath{\mathfrak{f}}}
\newcommand{\fkh}{\ensuremath{\mathfrak{h}}}
\newcommand{\fks}{\ensuremath{\mathfrak{s}}}
\newcommand{\fkt}{\ensuremath{\mathfrak{t}}}
\newcommand{\bbA}{\ensuremath{\mathbb{A}}}
\newcommand{\bbC}{\ensuremath{\mathbb{C}}}
\newcommand{\bbG}{\ensuremath{\mathbb{G}}}
\newcommand{\bbQ}{\ensuremath{\mathbb{Q}}}
\newcommand{\bbR}{\ensuremath{\mathbb{R}}}
\newcommand{\bbZ}{\ensuremath{\mathbb{Z}}}
\newcommand{\dom}{\ensuremath{\mathrm{dom}}}
\newcommand{\calA}{\ensuremath{\mathcal{A}}}
\newcommand{\calB}{\ensuremath{\mathcal{B}}}
\newcommand{\calG}{\ensuremath{\mathcal{G}}}
\newcommand{\calH}{\ensuremath{\mathcal{H}}}
\newcommand{\calO}{\ensuremath{\mathcal{O}}}
\newcommand{\calT}{\ensuremath{\mathcal{T}}}
\newcommand{\calU}{\ensuremath{\mathcal{U}}}
\newcommand{\calX}{\ensuremath{\mathcal{X}}}
\newcommand{\af}{\ensuremath{\mathrm{af}}}
\newcommand{\lleq}{\ensuremath{\preccurlyeq}}
\newcommand{\ggeq}{\ensuremath{\succcurlyeq}}
\newcommand{\po}{\ar@{}[dr]|{\text{\pigpenfont R}}}
\newcommand{\pb}{\ar@{}[dr]|{\text{\pigpenfont J}}}
\begin{document}
\title{On the smooth locus of affine   Schubert varieties }
\author{Georgios Pappas}
\address{Dept. of Mathematics, Michigan State University, E. Lansing, MI 48824, USA}
\email{pappasg@msu.edu}

\author{Rong Zhou}
\address{Department of Pure Mathematics and Mathematical Statistics, University of Cambridge, Cambridge, UK, CB3 0WA}
\email{rz240@dpmms.cam.ac.uk}

\begin{abstract} 
We give a simple and uniform proof of a conjecture of Haines--Richarz characterizing the smooth locus of Schubert varieties in twisted affine Grassmannians. Our method is elementary and avoids any representation theoretic techniques, instead relying on a combinatorial analysis of tangent spaces of Schubert varieties.
\end{abstract}

\maketitle


\date{\today}


\tableofcontents

\section{Introduction}
Schubert subvarieties of flag varieties, classical or affine, and their singularities have been a subject of considerable interest for  some time. A basic problem is to characterize  the Schubert varieties which are smooth or, more generally, identify their smooth locus; see, for example, \cite{BiLaksh} for many results in the classical case. In the affine case, this problem also relates,
via the theory of local models \cite{PRS}, to the study of singularities of reductions of Shimura varieties modulo primes. For example, understanding when affine Schubert varieties are smooth leads to a classification of Shimura varieties with parahoric level at a prime, whose reduction at that prime is smooth, see \cite{HPR}.

It turns out that this problem is simpler for Schubert varieties in  affine Grassmannians.
Indeed, it was first shown by Evens--Mirkovic \cite{EM} that, for Schubert varieties in affine Grassmannians of split groups, the smooth locus is simply the `obvious' open orbit. This result follows from their calculation of the characteristic cycle of the intersection cohomology sheaf of the Schubert variety and easily implies a characterization of the Schubert varieties which are smooth. Affine Grassmannians are also objects of Kac-Moody theory: The split case above is obtained from `untwisted' affine Kac--Moody Lie algebras   but one can equally consider  the general (twisted) affine case, cf. \cite[Ch.\,13]{KumarBook}. The corresponding (twisted) affine Grassmannians can also be constructed using  loop groups obtained from reductive groups over the field $\bbC\llps t \lrps$ of Laurent power series, see \cite{PR}, also for the comparison between the two points of view. In this group-theoretic set-up, Haines and Richarz gave a classification of all Schubert varieties in twisted affine Grassmannians which are smooth and also conjectured a generalization of the Evens--Mirkovic result \cite[Conjecture 5.4]{HainesRicharz}. The result of Haines and Richarz was used in the classification of Shimura varieties with smooth parahoric reduction of \cite{HPR}.

In this paper, we give a simple combinatorial proof of both the original result of Evens--Mirkovic and of its extension conjectured by Haines--Richarz. Let us  now give some details. We formulate our results using the theory of reductive groups over the discretely valued field $\bbC\llps t \lrps$. Let $G$ be a connected reductive group over  $F=\bbC\llps t \lrps$. Associated to any special vertex $\fks$ in the Bruhat--Tits building for $G$, we have an associated group scheme $\calG$ over $\bbC\lps t\rps$. We let $\Gr_{\calG}:=LG/L^+\calG$ denote the twisted affine Grassmannian associated to $\calG$, see \cite{PR}.  
We fix $T\subset B\subset G$ a maximal torus and Borel pair in $G$ defined over $F$ which are in good position relative to $\fks$, and let $I=\mathrm{Gal}(\overline{F}/F)$. Then to each dominant element  $\mu\in X_*(T)_I^+$, we have an associated point $t^\mu\in \Gr_{\calG}(\bbC)$ and the  associated (affine) Schubert variety $\Gr_{\calG,\lleq\mu}$. By definition $\Gr_{\calG,\lleq\mu}$ is the closure of the $L^+\calG$-orbit  $\Gr_{\calG,\mu}$ of $t^\mu$. Our main theorem is the following, which verifies \cite[Conjecture 5.4]{HainesRicharz}.

\begin{thm}\label{introthm: main}
	Let $\fks$ be an absolutely special vertex. Then the smooth locus of $\Gr_{\calG,\lleq\mu}$ is $\Gr_{\calG,\mu}$.
\end{thm}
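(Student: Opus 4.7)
The plan is to reduce the theorem to a combinatorial count of $T$-weights in tangent spaces at torus-fixed points, and then to establish the needed count by producing affine root reflections explicitly. Since $L^+\calG$ preserves the smooth locus and each orbit $\Gr_{\calG,\lambda}$ is itself smooth, the smooth locus is a union of $L^+\calG$-orbits. Setting $d := \dim \Gr_{\calG,\lleq\mu}$, it therefore suffices to prove
\[
\dim_{\bbC} T_{t^\lambda}\Gr_{\calG,\lleq\mu} > d \quad \text{for every } \lambda \in X_*(T)_I^+ \text{ with } \lambda \lneqq \mu.
\]
The tangent space on the left is $T$-stable and decomposes into weight spaces indexed by real affine roots at $\fks$, so this becomes a problem of counting such roots.

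The next step is to extract a combinatorial formula for this dimension. Pulling back along the projection from the affine flag variety $\Fl_\calG \twoheadrightarrow \Gr_\calG$ reduces the computation to the corresponding one at Iwahori level, where the Carrell--Peterson / Kumar tangent-space formula for Kac--Moody Schubert varieties is available. Descending to the spherical quotient yields an identification of the form
\[
T_{t^\lambda}\Gr_{\calG,\lleq\mu} \;\cong\; \bigoplus_{\alpha \in R(\lambda,\mu)} \bbC \cdot e_\alpha,
\]
where $R(\lambda,\mu)$ is the set of positive real affine roots $\alpha$ attached to $(G,\fks)$ such that the $W_0$-dominant representative of $s_\alpha\cdot \lambda$ lies $\lleq \mu$ in $X_*(T)_I^+$; here $s_\alpha$ acts on $X_*(T)_{I,\bbR}$ by the affine Weyl group action based at $\fks$, and $W_0$ is the finite relative Weyl group.

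It remains to establish the strict inequality $|R(\lambda,\mu)| > d$ for every $\lambda \lneqq \mu$. By transitivity of closures, it suffices to treat a covering relation $\lambda \lessdot \mu$. Two disjoint families of elements of $R(\lambda,\mu)$ will be produced explicitly: first, the finite positive roots $\alpha$ with $\alpha(\lambda) > 0$, whose reflections move $\lambda$ within its $W_0$-orbit and therefore contribute $\dim \Gr_{\calG,\lambda}$ roots (these span the tangent space to the orbit through $t^\lambda$); and second, additional affine roots of the form $\alpha_0 + k\delta$ with $k \neq 0$, whose affine reflections push $\lambda$ to a coweight whose dominant conjugate is still $\lleq \mu$. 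The construction of the second family uses the direction $\mu - \lambda$ in the coinvariant coweight lattice together with the availability of affine reflections across hyperplanes through $\fks$ guaranteed by absolute-specialness.

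The main obstacle is the final combinatorial verification that these two families together produce strictly more than $d$ elements. The finite-root contribution alone equals $\dim \Gr_{\calG,\lambda}$, which is strictly less than $d = \dim \Gr_{\calG,\mu}$, so one must genuinely produce enough roots in the second family, uniformly across all twisted affine types. The hypothesis that $\fks$ is absolutely special enters precisely here: it ensures that the affine root system at $\fks$ is a reduced root system with a full complement of reflections, which is what makes the supply of extra affine roots sufficient. This uniform, type-independent count is the elementary substitute for the representation-theoretic input used by Haines--Richarz.
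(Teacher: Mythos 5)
Your plan starts with the same reduction as the paper (reduce to a covering relation $\lambda \lessdot \mu$ and show $\dim T_{t^\lambda}\Gr_{\calG,\lleq\mu}>d$), but your central tangent-space formula is wrong, and the error is not cosmetic: it eliminates exactly the mechanism that produces the singularity. The claimed identification
\[
T_{t^\lambda}\Gr_{\calG,\lleq\mu}\;\cong\;\bigoplus_{\alpha\in R(\lambda,\mu)}\bbC\cdot e_\alpha
\]
omits the weight-zero (``Cartan'' or imaginary-root) summand. For a twisted affine Grassmannian, the tangent space at the base point is
\[
T_e\Gr_{\calG}\;\cong\;\Bigl(\bigoplus_{a\in\Phi_{\af},\ a(\fks)<0}\bbC\, e_a\Bigr)\;\oplus\;(\fkt_H\otimes u^{-1}\bbC[u^{-1}])^\sigma,
\]
and the Cartan piece $(\fkt_H\otimes u^{-1}\bbC[u^{-1}])^\sigma$ is nonzero. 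The Carrell--Peterson/Kumar machinery for Kac--Moody Schubert varieties does not transfer verbatim precisely because of this imaginary-root contribution, and the paper's argument crucially exploits it: the extra tangent direction at $t^\lambda$ is produced \emph{inside} the Cartan part. In the simplest covering relation --- $\mu-\lambda$ equal to a simple coroot $\beta^\vee$ --- the real-root part of $T_{t^\lambda}\Gr_{\calG,\lleq\mu}$ has dimension exactly $d=\langle\mu,2\rho\rangle$, not more. Your plan, which is to exhibit strictly more than $d$ real affine roots in $R(\lambda,\mu)$, would therefore fail already in this basic case. The paper instead proves only $\dim (T_{t^\lambda}\Gr_{\calG,\lleq\mu})^{\mathrm{root}}\geq d$ (via root curves and Stembridge's classification of minimal degenerations) and then obtains the strict inequality by conjugating one root curve by an element of an opposite affine root group, producing a vector whose $(\fkt_H\otimes u^{-1}\bbC[u^{-1}])^\sigma$-component is nonzero.

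Your account of where the absolutely-special hypothesis enters is also off. It is not that absolute specialness gives ``a reduced root system with a full complement of reflections'' --- every special vertex has the full finite Weyl group realized by affine reflections through it; that is the definition of special. What absolute specialness buys is that the affine root $\psi_{\fks_0}(\alpha)$ relevant to the construction comes from a relative root $\alpha'$ with $\tfrac12\alpha'\notin\Phi$, so that the explicit $\SL_2$/$\SU_3$ computation (the paper's Proposition on the Cartan direction) applies and $\mathrm{Ad}(h)e_a$ has nonzero Cartan component. For the $C\text{-}BC_n$ special-but-not-absolutely-special vertex, the neighboring affine root group falls into the excluded case, the Cartan computation yields zero, and in fact exotic smoothness occurs. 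Finally, even granting the plan's framing, the decisive combinatorial count (``the two families together produce strictly more than $d$ elements'') is asserted rather than established; you will find when attempting it that the real-root count does not exceed $d$, which is why the Cartan direction is indispensable.
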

As we mentioned above, the result is well-known in the case of split groups due to work of Evens--Mirkovic \cite{EM},  see also Malkin--Ostrik--Vybornov \cite[Cor. B]{MOV}. When the group $G$ is not split, the picture is complicated  by a phenomenon which was first observed in the theory of local models for integral models of Shimura varieties by Richarz (\cite[Proposition 4.6]{Arz}): If the vertex $\fks$ is special but not absolutely special, it can happen that the smooth locus of a Schubert variety is larger than the open stratum. This implies the existence of certain Shimura varieties with `exotic good reduction' (i.e. Shimura varieties with non-hyperspecial level at a prime number $p$ which, nevertheless, have good reduction at $p$, see \cite[\S 5]{HPR}). Our result verifies  the expectation that this does not happen in any other case.  Implicitly, it also simplifies the proofs of the classification result of \cite{HPR}.

This conjecture has also been considered in work of Besson--Hong \cite{BH}, who verified it when $G_{\ad}$ does not have factors of type $C-BC_n$. Their method is very indirect and relies on difficult representation theoretic techniques. In particular, it crucially uses an extension of a result of Zhu \cite{ZhuTfixed} which gives a geometric interpretation of the Frenkel--Kac--Segal isomorphism between basic representations of affine Lie algebras and lattice vertex algebras. Furthermore, in the case of $E_6$, the proof requires computer assistance. Combined with  Richarz's thesis, which analyses the $C-BC_n$ case explicitly, this can be used to deduce the conjecture in general. In contrast, our method is  simpler, avoiding any  use of representation theory and computer aided computations, and can be applied uniformly to all cases. Specialized to the case of split groups, it gives a new proof of the result in  \cite{EM} and \cite{MOV}.  Moreover, Theorem \ref{introthm: main}, combined with now standard techniques, can be used to classify the smooth locus of twisted affine Grassmannians when $\bbC$ is replaced by an algebraically closed field of any characteristic, at least when the group is tamely ramified (cf. \cite[Corollary 6.1]{HainesRicharz}.)

We now give some details of the proof. Since the smooth locus is open, it suffices to show that when $\fks$ is absolutely special, $\Gr_{\calG,\lleq\mu}$ is singular along the irreducible components of the boundary. These are given by  $\Gr_{\calG,\lleq\lambda}$, where $\mu\rightsquigarrow\lambda$ is a minimal degeneration (cf. \S\ref{subsec: min deg}).
To prove this, we consider a lower bound for the dimension of the tangent space to $\Gr_{\calG,\lleq\mu}$ at $t^\lambda$.  We obtain  such a bound by considering the tangent vectors arising from smooth curves in the Schubert variety coming from affine root groups; we call these  \textit{root curves}. We show that for a minimal degeneration, the lower bound we obtain is equal to the dimension of the Schubert variety; here we use Stembridge's classification of minimal degenerations and some simple computations using the \'echelonnage root system. When $\fks$ is absolutely special,  conjugating one of the root curves by an element of the opposite unipotent gives an extra tangent vector which makes the point non-smooth; this extra tangent direction has a component `along the Cartan direction' (cf. Proposition \ref{prop: tangent space AFV}). The assumption that $\fks$ is absolutely special is crucial here; in the case of exotic good reduction the shape of the neighbouring affine root groups for the special but not absolutely vertex prevents us from producing extra tangent directions. From this point of view, one can think of the singularities of minimal degenerations as being caused by extra tangent directions along the Cartan. 
Roughly speaking, the moral of the main Theorem of \cite{ZhuTfixed} is similar: Properties of $\Gr_{\calG}$ can be studied by restriction to the   $0$-dimensional ind-subscheme $\Gr_{\calT}\subset \Gr_{\calG}$, where $\calT\subset\calG$ is a maximal torus.

We now give an outline of the paper. In section 2, we give an explicit  description of the tangent space of partial affine flag varieties at the base point in terms of affine roots. This should be well-known to experts, but we provide the details here as they are needed for later. In section 3, we use the tangent vectors from root curves to give a lower bound on the dimension of tangent spaces of Schubert varieties, and in section 4 we apply the bound to the boundary components of minimal degenerations to prove the main result.  At the end of the section, we give an alternative, less combinatorial, argument for the proof. This does not even need the full classification of minimal degenerations due to Stembridge but uses some additional algebraic geometric input. We finish with an example of the method applied to the case of the quasi-minuscule Schubert variety in the ramified triality, which gives a short proof of a  result of Haines--Richarz \cite[\S8]{HainesRicharz}.

\textit{Acknowledgements:} G.\,P. is supported by NSF grant \#DMS-2100743. R.\,Z. is supported by  EPSRC grant ref. EP/Y030648/1 as part of the ERC Starting Grant guarantee scheme. The authors would like to thank  Thomas Haines, Jo\~ao Louren\c{c}o, Michael Rapoport and Timo Richarz for comments on an earlier version of this paper.

\section{Tangent spaces of partial affine flag varieties}

We give a description of the tangent space of partial affine flag varieties at the base point in terms of the associated affine root system. Such a description  can also be  deduced from the relationship between the affine flag varieties and Kac--Moody algebras as in \cite[Proposition 4.5]{HLR}. However our construction of the isomorphism in Proposition \ref{prop: tangent space AFV} and its relation with affine root groups will be useful for our purposes.

\subsection{}Let $F=\bbC(\!(t)\!)$ and $G/F$ be an adjoint, absolutely  simple reductive group.
Let $H$ be the split Chevalley form of  $G$ and let $F':=\bbC(\!(u)\!)/F$ be the minimal splitting field for $G$ of degree $e$ with $u^e=t$. Let $\tau$ be a generator of $I:=\mathrm{Gal}(F'/F)$; then $\tau(u)=\zeta u$ where $\zeta$ is an $e^{\mathrm{th}}$ root of unity. Since $G$ is absolutely simple, we have $e=1,2$ or $3$. For any  affine  group scheme $X$ over $F$, we write $LX$ for the loop group scheme over $\C$ associated to $X$. Similarly for $\calX$  an affine  group scheme over $\O_F:=\bbC\lps t\rps$, we write $L^+\calX$ for the positive loop group scheme over $\C$ associated  to $\calX$, see \cite{PR}.

Let $H$ denote the split Chevalley form over $\bbC$ of $G$. Upon fixing a Chevalley pinning $(B_H,T_H,\{X_{\alpha'}\})$ of $H$, we may realize $G$ as the functor whose value on an $F$-algebra $R$ is given by 
$$
G(R):=\{g\in H(R\otimes_FF')\, |\, \sigma(g)=g\}
$$
 where $\sigma=\sigma_0\circ \tau$, and  $\sigma_0$ is a diagram automorphism of the Dynkin diagram of $H$, which we consider as an element of $\mathrm{Aut}(H)$ via the pinning. 

The torus $T_H$ determines a maximal torus $T$ of $G$, and we let $S$ denote the maximal $F$-split subtorus of $T$, which is a maximal $F$-split torus of $G$. We let $B\supset T$ be the Borel subgroup defined over $F$  determined by $B_H$ and let $X_*(T)_I^+\subset X_*(T)_I$ denote the set of dominant elements with respect to the choice of $B$.

\subsection{}\label{para: affine root gps}We recall the description of the affine root system for $G$ and $H$ following \cite{PR1}. Let $\Phi$ denote the set of relative roots for $G$ and $\Phi_{\af}$ the set of affine roots.  Similarly,  we let $\Phi_{H}$ denote the set of absolute roots for $H$, and $\Phi_{H,\af}$ the set of affine roots. Let $\calA(G,S,F)$ (resp. $\calA(H,T_H,F')$) denote the apartment for $S$ over $F$ (resp. $T_H$ over $F'$). Then the affine roots $\Phi_{\af}, \Phi_{H,\af}$ are affine functions on $\calA(G,S,F)$, $\calA(H,T_H,F')$ respectively.

	\begin{definition} A vertex $\fks\in \calA(G,S,F)$ is said to be \textit{absolutely special} if its image under the simplicial embedding $\calA(G,S,F)\rightarrow \calA(H,T_H,F')$ is a special (equivalently hyperspecial) vertex.
\end{definition}
\begin{remark}This is a correction of \cite[Definition 5.1]{HainesRicharz}. It is necessary to take $F'$ the \textit{minimal} splitting field of $G$ since for $G={\rm PU}_{2n+1}$, the vertex labelled `s' in \cite[equation (5.1)]{HainesRicharz} becomes special over a degree 4 ramified extension of $F$. We refer to Remark \ref{rem: general group} for the definition of absolutely special vertex in general.
\end{remark}

The action of $\sigma$ on $H_{F'}$ induces an action on $\calA(H,T_H,F')$ which fixes the hyperspecial vertex corresponding to $H_{\calO_{F'}}$, and hence descends to a point $\fks_0\in \calA(G,S,F)$. Then $\fks_0$ is an absolutely special vertex in the sense above. 
The choice of $\fks_0$ induces identifications 
$$
\calA(G,S,F)\cong X_*(T)_I\otimes_{\bbZ}\bbR\cong X_*(S)\otimes_{\bbZ}\bbR,
$$
$$
\calA(H,T_H,F')\cong X_*(T_H)\otimes_{\bbZ}\bbR.
$$
Using these identifications, we can write affine roots $a\in \Phi_{\af}, a'\in \Phi_{H,\af}$  as $a=\alpha+m$, $a'=\alpha'+m'$, where $\alpha\in \Phi$ and $\alpha'\in \Phi_H$ and $m,m'\in \bbQ$.  We normalize the valuation $v$ on $F'$ so that $v(u)=\frac{1}{e}$. Then we have 
$$
\Phi_{H,\af}=\{\alpha'+m'\,|\,\alpha'\in \Phi_H, m'\in\frac{1}{e}\bbZ\}.
$$
For $a'=\alpha'+m'\in \Phi_{H,\af}$, the corresponding root subgroup  $\calU'_{a'}\subset LH$  over $\bbC$ corresponds to the subspace $\bbC X_{\alpha'}u^m$ of the Lie algebra of the loop group $LH$, where $X_{\alpha'}\in \mathrm{Lie}H$ is our fixed Chevalley generator.

Let $\alpha\in \Phi$  be a relative root,  then $\alpha$ determines a $\sigma$-orbit $C_\alpha\subset \Phi_H$ of absolute roots. We write $d_\alpha=|C_\alpha|$.  We define $$R_\alpha=\begin{cases} \frac{1}{d_\alpha}\bbZ & \text{if $\frac{1}{2}\alpha \notin \Phi$,}\\
\frac{1}{2d_\alpha}+\frac{1}{d_\alpha}\bbZ & \text{if $\frac{1}{2}\alpha\in \Phi$}.
\end{cases}$$
Then by \cite[Proposition 2.2]{PR1}, we have
$$
\Phi_{\af}=\{\alpha+m\,|\,\alpha\in \Phi, m\in R_\alpha\}.
$$

\subsection{} 

For $\alpha\in \Phi$ we write $U_\alpha\subset G$  for the corresponding relative root subgroup over $F$, and
for $a=\alpha+m\in \Phi_{\af}$, we let $\calU_{a}\subset LU_\alpha$ denote the corresponding root subgroup over $\bbC$ defined in \cite{PR}. For each $a\in \Phi_{\af}$ we recall the description of $\calU_a$ below. We then use this to define a morphism $\tilde{f}_a:\bbA^1\rightarrow \calU_a$ and a corresponding tangent vector $e_a$ in the Lie algebra of $LH$ fixed by $\sigma$.

Case (1): $\frac{1}{2}{\alpha},2\alpha\notin \Phi$. Then all elements of $C_\alpha$ are orthogonal to one another, and our choice of $X_{\alpha'}$ determines an isomorphism $x_\alpha:U_{\alpha}\cong \mathrm{Res}_{K/F}\bbG_a$ for $K/F$ of degree $d=d_\alpha=[K:F]$, where $d|e$. Under our assumptions, we have $d=1$ or $d=e$, and hence $K=F$ or $K=F'$.
Let $a=\alpha+m\in \Phi_{\af}$. Then $m\in  \frac{1}{d}\bbZ$, and $$x_\alpha(\calU_a(\bbC))=\bbC u^{em}.$$ We define an isomorphism $\tilde{f}_a:\bbA^1\xrightarrow{\sim}\calU_a$ such that its composition with $x_\alpha$ is given by $r\mapsto ru^{em}.$ The corresponding subspace of the Lie algebra of $LH$ is generated by

$$
e_a=\sum_{i=1}^{d} X_{\sigma_0^i(\alpha')}(\zeta^iu)^{em},
$$
 where $\alpha'\in C_\alpha$.

Case (2): $\alpha,2\alpha\in \Phi$.  In this case $F'/F$ is a degree 2 extension, and we have an isomorphism $x_\alpha:U_\alpha\cong M_{F',F}$ where $M_{F',F}$ is the group scheme with $F$-points given by
$$M_{F',F}(F)=\{(c,d)\,|\,c\sigma(c)+d+\sigma(d)=0\}.$$
Moreover, $U_{2\alpha}$ is the subgroup of $U_{\alpha}$ which corresponds under $x_\alpha$ to 
$$
\{(0,d)\,|\,d+\sigma(d)=0\}\subset M_{F', F}.
$$

(2a) Let $a=\alpha+m\in \Phi_{\af}$. Then $m\in \frac{1}{2}\bbZ$, and we have $$x_\alpha(\calU_{\alpha}(\bbC))=\{(ru^{2m},\frac{(-1)^{2m+1}}{2}r^2 u^{4m})| r\in \bbC\}.$$We let $\tilde{f}_a:\bbA^1\xrightarrow{\sim} \calU_a$ be  such that 
$$
x_\alpha\circ \tilde{f}_a(r)=(ru^{2m},\frac{(-1)^{2m+1}}{2}r^2 u^{4m}).
$$
 The corresponding subspace of the Lie algebra of $LH$ is generated by $$e_a=u^{2m}(X_{\alpha'}+(-1)^{2m}X_{\sigma_0(\alpha')}),$$ where $\alpha'\in C_\alpha$.

(2b) Let $a=2\alpha+m$. Then $m\in \frac{1}{2}+\bbZ$, and we have 
$$
x_{2\alpha}(\calU_{a}(\bbC))=\{(0,ru^{2m})|r\in \bbC\}.
$$
We let $\tilde{f}_a:\bbA^1\xrightarrow{\sim} \calU_a$ be  such that 
$$
x_\alpha\circ \tilde{f}_a(r)=(0,ru^{2m}).
$$
The corresponding element of the Lie algebra of $LH$ is $$e_a=u^{2m}X_{\alpha'}$$ for $\alpha'\in C_{2\alpha}$.

\subsection{}
Let $\fkh$ denote the Lie algebra of $H$ which decomposes as 
$$
\fkh\cong (\bigoplus_{\alpha'\in \Phi_H} \bbC X_{\alpha'})\oplus\fkt_H,
$$
 where $\fkt_H$ is the Lie algebra of $T_H$. This gives a corresponding decomposition of the vector space $$\fkh\otimes_{\bbC}\bbC[u,u^{-1}]=\left(\bigoplus_{\alpha'\in \Phi_H}\bbC[u,u^{-1}]X_{\alpha'}\right)\oplus (\fkt_H\otimes_{\bbC}\bbC[u,u^{-1}])\subset \mathrm{Lie}LH.$$
 This space is equipped with an action of $\sigma$, which preserves  $\bigoplus_{\alpha'\in \Phi_H}\bbC[u,u^{-1}]X_{\alpha'}$ and $\fkt_H\otimes_{\bbC}\bbC[u,u^{-1}]$.
\begin{prop}\label{prop: basis of invariants}
The vector space  $(\bigoplus_{\alpha'\in\Phi_H}\bbC[u,u^{-1}]X_{\alpha'})^\sigma$ has a basis given by the set $\calB=\{e_a\}_{a\in \Phi_{\af}}$.
\end{prop}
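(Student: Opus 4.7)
The plan is to diagonalize along $\sigma$-orbits on $\Phi_H$. Since $\sigma = \sigma_0\circ\tau$ permutes the lines $\bbC[u,u^{-1}]X_{\alpha'}$ (up to a sign in one special situation), the ambient space decomposes $\sigma$-stably as $V := \bigoplus_{\alpha'\in\Phi_H}\bbC[u,u^{-1}]X_{\alpha'} = \bigoplus_C V_C$, where $C$ ranges over $\sigma$-orbits on $\Phi_H$ and $V_C := \bigoplus_{\alpha'\in C}\bbC[u,u^{-1}]X_{\alpha'}$. Hence $V^\sigma = \bigoplus_C V_C^\sigma$, and I would bin the proposed basis accordingly: $e_{\alpha+m}\in V_{C_\alpha}$ in Cases 1 and 2a, while $e_{2\alpha+m}\in V_{C_{2\alpha}}$ in Case 2b. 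Linear independence within each $V_C$ is then immediate from the distinctness of the powers of $u$ appearing in the various $e_a$, so the real work is to verify that $\calB\cap V_C$ spans $V_C^\sigma$ one orbit at a time.

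Fix an orbit $C$ of size $d$ and a representative $\alpha'\in C$, and write a general element as $v = \sum_{i=0}^{d-1}p_i(u)X_{\sigma_0^i\alpha'}$ with $p_i\in\bbC[u,u^{-1}]$. Using $\sigma(p(u)X_{\beta'}) = p(\zeta u)\,\sigma_0(X_{\beta'})$ and the convention $\sigma_0(X_{\beta'}) = X_{\sigma_0\beta'}$ except that on the $\sigma_0$-fixed long-root line appearing in the $C$–$BC_n$ case $\sigma_0$ acts by $-1$ (this is the familiar sign coming from the diagram automorphism of $A_{2n}$), the condition $\sigma(v) = v$ translates into the recursion $p_{i+1}(u) = p_i(\zeta u)$ together with the cyclic closure $p_0(u) = \epsilon\, p_0(\zeta^d u)$, where $\epsilon = -1$ precisely in Case 2b and $\epsilon = 1$ otherwise.

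Matching the solutions to $\calB$ is then a direct calculation. In Case 1 one has $d\mid e$ and $\epsilon = 1$, forcing $p_0\in\bbC[u^{e/d},u^{-e/d}]$; the monomial basis $p_0(u) = u^{(e/d)k}$ with $k\in\bbZ$ recovers $\sum_i(\zeta^i u)^{em}X_{\sigma_0^i\alpha'} = e_{\alpha+m}$ with $m = k/d$ ranging over $R_\alpha = \tfrac{1}{d}\bbZ$. Case 2a reduces to the same calculation with $d = 2$, $\zeta = -1$. In Case 2b $d = 1$ and $\epsilon = -1$, so the condition becomes $p_0(u) = -p_0(-u)$, selecting the odd Laurent polynomials with monomial basis $u^{2m}$ for $2m\in 1+2\bbZ$; these reproduce precisely the $e_{2\alpha+m}$ with $m\in R_{2\alpha} = \tfrac{1}{2}+\bbZ$.

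The only piece demanding real care is the sign $\epsilon$ in the $C$–$BC_n$ case; once it is identified, the rest is the bookkeeping translation between the invariance condition on $p_0$ and the definition of $R_\alpha$ in \S\ref{para: affine root gps}.
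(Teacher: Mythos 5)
Your argument is correct and follows the same strategy as the paper: decompose by $\sigma$-orbits on $\Phi_H$ and work out what $\sigma$-invariance forces orbit by orbit, with the key sign being $\sigma_0(X_{\alpha'}) = -X_{\alpha'}$ on $\sigma_0$-fixed roots in $A_{2n}$ (Case 2b). The paper packages this as an induction on the number of nonzero coefficients (peel off one $e_a$ at a time from an arbitrary invariant vector, using $\sigma^d$-invariance to constrain the $u$-exponent) rather than as an explicit parametrization of $V_C^\sigma$ by the Laurent polynomial $p_0$, but the underlying computations are the same.
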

\begin{proof} Clearly, the $e_a$ are $\sigma$-invariant and linearly independent. 
	
Note that the vector space $ \bbC[u,u^{-1}]X_{\alpha'}$  has a basis given by 
$$
\{X_{\alpha'}u^m|\alpha'\in \Phi_H,m\in \bbZ\}.
$$ 

Suppose $v\in(\bigoplus_{\alpha'\in\Phi_H}X_{\alpha'}\otimes \bbC[u,u^{-1}])^\sigma$ has a non-zero component $X_{\alpha'}u^m$; upon scaling we may assume this coefficient is 1. Let $\alpha\in \Phi_{\af}$ denote the associated relative root. We show $v$ is spanned by the $e_a$ by induction on the number of non-zero coefficients.

Case (1): $\frac{1}{2}{\alpha},2\alpha\notin \Phi$. Let $d=|C_\alpha|$ as above. Then by $\sigma^d$-invariance, we have  $\frac{e}{d}|m$, and the coefficient of $X_{\sigma_0^i(\alpha')}u^m$ in $v$ is $\zeta^{im}$ for $i=1,\dotsc,d$. Set $a=\alpha+\frac{1}{e}m\in \Phi_{\af}$. Then $v-e_a$ is spanned by $\calB$ by induction.

Case (2a): $2\alpha\in \Phi$. In this case $C_\alpha=\{\alpha',\sigma_0(\alpha')\}$, and the coefficient of $X_{\sigma_0(\alpha')}u^m$ is $(-1)^{m}u^m$. Then for $a=\alpha+\frac{1}{2}m$, $v-e_a$ is spanned by $\calB$ by induction.

Case (2b): $\frac{1}{2}\alpha\in \Phi$. In this case $C_\alpha=\{\alpha'\}$, and the $\sigma$-invariance implies $m$ is odd. Set $a=\alpha+m$; then $v-e_a$ is spanned by $\calB$ by induction. \end{proof}

\subsection{} The following computation will be used in \S\ref{sec: min deg} to produce extra tangent directions along the Cartan.

\begin{prop}\label{prop: Cartan direction}

\begin{enumerate}
	\item Let $a=\alpha+m$ with $\frac{1}{2}\alpha,2\alpha\notin\Phi$. Set $b=-\alpha-m-\frac{1}{d_\alpha}\in \Phi_{\af}$ and $h=\tilde{f}_b(1)$. Then $\mathrm{Ad}(h) e_a$ has a non-zero component in $(\fkt_H\otimes u^{-1}\bbC[u^{-1}])^\sigma$.
	
	\item Let $a=2\alpha+m$ with $\alpha\in \Phi$ so that $2m$ is an odd integer. Set $b=-\alpha-\frac{m}{2}-\frac{1}{2}\in \Phi_{\af}$ and $h=\tilde{f}_b(1)$. Then $\mathrm{Ad}(h)e_a$ has a non-zero component in $(\fkt_H\otimes u^{-1}\bbC[u^{-1}])^\sigma$.
\end{enumerate}

\end{prop}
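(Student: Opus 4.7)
The approach is to compute $\mathrm{Ad}(h)(e_a)$ directly in $\fkh \otimes \bbC[u,u^{-1}]$ and extract its projection onto the Cartan factor $\fkt_H \otimes \bbC[u,u^{-1}]$ in the root space decomposition of \S2.4. Because $h$ is a $\bbC$-point of the $F$-group scheme $\calU_b$ and $e_a$ is $\sigma$-fixed by Proposition \ref{prop: basis of invariants}, the output automatically lies in $(\fkh \otimes \bbC[u,u^{-1}])^\sigma$, and its Cartan projection is then $\sigma$-invariant. What remains is to show this projection is nonzero and involves only strictly negative powers of $u$.

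For (1), the orthogonality of the roots in $C_\alpha = \{\sigma_0^i(\alpha')\}_{i=1}^{d_\alpha}$ makes the corresponding negative root subgroups of $H$ commute, so under $x_{-\alpha}:U_{-\alpha}\cong\Res_{K/F}\bbG_a$ we have $h = \exp(e_b)$, with $e_b = \sum_i (\zeta^i u)^{em'} X_{-\sigma_0^i(\alpha')}$ and $m' = -m - 1/d_\alpha$. Orthogonality further forces $\mathrm{ad}(e_b)^3 e_a = 0$, so the expansion of $\exp(\mathrm{ad}(e_b))e_a$ has only three non-zero terms and the unique Cartan contribution is
\[
[e_b, e_a] = -u^{-e/d_\alpha}\sum_{i=1}^{d_\alpha}\zeta^{-ie/d_\alpha}\,H_{\sigma_0^i(\alpha')}.
\]
This is nonzero because the coroots of pairwise orthogonal roots are linearly independent and the coefficients $\zeta^{-ie/d_\alpha}$ are all nonzero; its $u$-degree $-e/d_\alpha$ is a negative integer.

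For (2), set $\tilde\beta := \alpha' + \sigma_0(\alpha') \in C_{2\alpha}$, so $e_a = u^{2m} X_{\tilde\beta}$, and write $e_b = u^{2m'}(X_{-\alpha'} + (-1)^{2m'}X_{-\sigma_0(\alpha')})$ with $2m' = -m-1$. Since $[X_{-\alpha'}, X_{-\sigma_0(\alpha')}] = \pm X_{-\tilde\beta}$, the two negative root subgroups no longer commute; by the Heisenberg Baker--Campbell--Hausdorff formula one has $h = \exp(e_b + \lambda u^{4m'} X_{-\tilde\beta})$ for some $\lambda \in \bbC$. The first-order bracket $[e_b, e_a]$ lies entirely in $\bbC X_{\alpha'} \oplus \bbC X_{\sigma_0(\alpha')}$, so the Cartan projection comes from the $\lambda$-term (contributing $-\lambda u^{-2} H_{\tilde\beta}$) together with the second-order bracket $\tfrac12\mathrm{ad}(e_b)^2 e_a$ (contributing a $u^{-2}$ combination of $H_{\alpha'}$ and $H_{\sigma_0(\alpha')}$ with coefficients built from the Chevalley constants $A := N_{-\alpha', \tilde\beta}$ and $B := N_{-\sigma_0(\alpha'), \tilde\beta}$). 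Using the simply-laced identity $H_{\tilde\beta} = H_{\alpha'} + H_{\sigma_0(\alpha')}$ in $\mathfrak{sl}_3$, the projection becomes $u^{-2}(c_1 H_{\alpha'} + c_2 H_{\sigma_0(\alpha')})$ with $c_1 - c_2$ proportional to $A - B$. A Jacobi computation in the $\mathfrak{sl}_3$-subalgebra spanned by $\alpha', \sigma_0(\alpha')$ gives $A = -B \neq 0$, so $c_1 - c_2 \neq 0$ and the Cartan projection is nonzero.

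The main obstacle is in part (2), where one must correctly pin down the Heisenberg correction $\lambda$ from the identification $x_{-\alpha}: U_{-\alpha} \cong M_{F',F}$ and track both the first- and second-order brackets. The Jacobi relation $A = -B$ renders the final nonvanishing completely insensitive to the exact value of $\lambda$, so the argument reduces to a clean and finite $\mathfrak{sl}_3$-calculation, parallel in spirit to part (1) but one order of $\mathrm{ad}(e_b)$ deeper.
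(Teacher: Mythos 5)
Your approach is genuinely different from the paper's: the paper reduces to the rank-one subgroup $G_\alpha\cong\Res_{K/F}\SL_2$ (case 1) or $\SU_3$ (case 2) and carries out an explicit matrix conjugation, whereas you compute $\mathrm{Ad}(h)e_a$ directly in $\fkh\otimes\bbC[u,u^{-1}]$ using $\mathrm{Ad}(\exp(\cdot))=\exp(\mathrm{ad}(\cdot))$ and structure-constant identities, and then read off the Cartan projection. Both are finite rank-one computations in the end, but yours is more intrinsic and avoids having to choose a concrete Hermitian form. Your case (1) is clean and correct: orthogonality of the $\sigma_0^i(\alpha')$ gives $h=\exp(e_b)$, $\mathrm{ad}(e_b)^3 e_a=0$, the Cartan piece is $[e_b,e_a]=-u^{-e/d_\alpha}\sum_i\zeta^{-ie/d_\alpha}H_{\sigma_0^i(\alpha')}$, and linear independence of orthogonal coroots gives nonvanishing. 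Your case (2), modulo the details below, also lands on the right key fact: the $u$-weight-$(-2)$ Cartan component is $u^{4m'+2m}\bigl(c_1 H_{\alpha'}+c_2 H_{\sigma_0(\alpha')}\bigr)$ with $c_1-c_2$ proportional to $N_{-\alpha',\tilde\beta}-N_{-\sigma_0(\alpha'),\tilde\beta}$, which is nonzero in $\mathfrak{sl}_3$ (the two constants are opposite signs), and this is insensitive to any central correction in $\exp$. The height-grading argument you implicitly use (only $n=1,2$ contribute to the degree-zero part) is correct.

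Two points to flag in case (2). First, both the Proposition statement and your notation carry an apparent typo: with $a=2\alpha+m$ and $2m$ odd, the affine part of $b=-\alpha-\frac m2-\frac12$ is $-\frac m2-\frac12\in\frac14+\frac12\bbZ$, which does not lie in $R_{-\alpha}=\frac12\bbZ$, so $b\notin\Phi_{\af}$ and $u^{2m'}$ with $2m'=-m-1$ is a non-integral power of $u$. The intended formula is $b=-\alpha-m-\frac12$, which gives $2m'=-2m-1\in 2\bbZ$ and specializes to $b=-\alpha$ when $m=-\tfrac12$, exactly matching the matrix $h$ the paper uses. Second, with that correction, $h=\tilde f_b(1)$ is in fact $\exp(e_b)$ on the nose — the quadratic $d$-coordinate in the $M_{F',F}$-parametrization is precisely the $\frac12$-correction in the exponential, so your $\lambda$ is $0$ and the Heisenberg BCH step can be skipped. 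As written, you introduce $\lambda$ without pinning it down from the isomorphism $x_{-\alpha}:U_{-\alpha}\cong M_{F',F}$; this does not invalidate the argument because you correctly observe the conclusion is $\lambda$-independent, but the claim "$h=\exp(e_b+\lambda u^{4m'}X_{-\tilde\beta})$" deserves a sentence of justification (e.g., that $h$ lies in the unipotent radical whose log is the $3$-dimensional Heisenberg Lie algebra, and the $X_{-\alpha'}$- and $X_{-\sigma_0(\alpha')}$-components of $\log h$ agree with those of $e_b$ because $\tilde f_b$ is linear in $r$ in the $c$-coordinate).
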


\begin{proof}In each case let $G_\alpha$ denote the simply-connected cover of the subgroup of $G$ generated by $U_\alpha$ and $U_{-\alpha}$. Then $G_\alpha\cong \Res_{K/F}\SL_2$ for some $K/F$ finite	 in (1) and $G_\alpha\cong \SU_3$ the special unitary group for a degree 2 ramified extension $F'/F$ in (2). Since the affine root groups contained in $U_\alpha,U_{-\alpha}$ arise from $G_\alpha$, and the natural map $LG_\alpha\rightarrow LG$ induces an injection on tangent spaces preserving Cartan spaces, it suffices to prove (1) and (2) for the group $G_\alpha$. In these cases, we  compute using the standard matrix representations of these groups. Moreover, upon possibly changing the choice of absolutely special vertex $\fks_0$, we may assume $m=0$ in (1) and $m=-\frac{1}{2}$ in (2).
	
	(1) Suppose $K=F(u')$, so that $u^{\frac{e}{d_\alpha}}=u'$. We have 
	$$
	e_a=\left(\begin{matrix} 0 & 1 \\ 0 & 0  \end{matrix}\right)\text{ and } h=\left(\begin{matrix} 1 & 0 \\ u'^{-1}&1 \end{matrix}\right).
	$$
	Then we have 
	\begin{align*}\mathrm{Ad}(h)e_a&= \left(\begin{matrix} 1 & 0 \\u'^{-1}& 1
	\end{matrix}\right) \left(\begin{matrix} 0& 1\\ 0& 0
	\end{matrix}\right)\left(\begin{matrix} 1 & 0 \\-u'^{-1}& 1
	\end{matrix}\right)
	\\ &=\left(\begin{matrix} -u'^{-1} & 1 \\  -u'^{-2} & u'^{-1}
	\end{matrix}\right). \end{align*}
The diagonal component of $\mathrm{Ad}(h)e_a$ gives a non-zero component in $(\fkt_H\otimes u^{-1}\bbC[u^{-1}])^\sigma$.
	
	(2)  	In this case, $e=d_\alpha=2$.  Then we have $$e_a=\left(\begin{matrix}
0 & 0 & u^{-1}\\ 0 & 0& 0 \\ 0 & 0 & 0
	\end{matrix}\right)\text{ and } h= \left(\begin{matrix}
	1 & 0   & 0 \\ 1 & 1& 0 \\ -\frac{1}{2}& -1& 1
	\end{matrix}\right).$$
	Then we compute \begin{align*}\mathrm{Ad}(h) e_a&= \left(\begin{matrix}
	1 & 0   & 0 \\ 1& 1& 0 \\  -\frac{1}{2} & -1& 1
	\end{matrix}\right) \left(\begin{matrix}
	0 & 0 & u^{-1}\\ 0 & 0& 0 \\ 0 & 0 & 0
	\end{matrix}\right)\left(\begin{matrix}
	1 & 0   & 0 \\ -1 & 1& 0 \\  -\frac{1}{2}& 1& 1
	\end{matrix}\right)\\
	&=\left(\begin{matrix}-\frac{u^{-1}}{2} & u^{-1}   & u^{-1} \\ -\frac{u^{-1}}{2} & u^{-1}& u^{-1} \\  -\frac{u^{-1}}{4}& -\frac{u^{-1}}{2}& -\frac{u^{-1}}{2}
	\end{matrix}\right).
	\end{align*}
	The diagonal of $\mathrm{Ad}(h)e_a$ gives a non-zero component in $(\fkt_H\otimes u^{-1}\bbC[u^{-1}])^\sigma$.
	\end{proof}

\subsection{} Now let $\fkf\subset \calA(G,S,F)$ be a facet. Then $\fkf$ determines a facet in $\calA(H,T_H,F')$ also denoted $\fkf$. We let $\calG_{\fkf}$ and $\calH_{\fkf}$ denote the parahoric group schemes  over $\calO_F$ and $\calO_{F'}$ respectively corresponding to $\fkf$. We let $\Fl_{G,\fkf}:=LG/L^+\calG_{\fkf}$ denote the partial affine flag variety associated to $\calG_{\fkf}$, and $e\in \Fl_{G,\fkf}(\bbC)$ the base-point. We have the following description of the tangent space $T_e\Fl_{G,\fkf}$  to $\Fl_{G,\fkf}$ at the base-point.

\begin{prop}\label{prop: tangent space AFV}
	There is an isomorphism
	$$
	T_e\Fl_{G,\fkf}\cong \left(\bigoplus_{a\in\Phi_{\af}, a(\fkf)<0}\bbC e_a\right)\oplus(\fkt_H\otimes u^{-1}\bbC[u^{-1}])^\sigma.
	$$	Here, we write $a(\fkf)<0$ if $a(x)<0$ for some (equivalently) all $x\in \fkf$.
\end{prop}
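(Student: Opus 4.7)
The plan is to compute $T_e\Fl_{G,\fkf}$ as a canonical $\bbC$-subspace complement of $\mathrm{Lie}(L^+\calG_\fkf)$ inside $\mathrm{Lie}(LG)$ by first handling the split group $H$ and then passing to $\sigma$-invariants. First I would identify $\mathrm{Lie}(LG) = (\fkh\otimes_{\bbC}\bbC(\!(u)\!))^\sigma$ and $\mathrm{Lie}(L^+\calG_\fkf) = \bigl(\mathrm{Lie}(\calH_\fkf)(\calO_{F'})\bigr)^\sigma$, using that $\calG_\fkf$ is obtained from $\calH_\fkf$ by tame descent, and the standard fact that the tangent space of a loop/parahoric group is computed by the Lie algebra functor evaluated at the appropriate ring.

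For the split Chevalley group $H$ over $F'$, the Bruhat--Tits description of the parahoric recalled in \S\ref{para: affine root gps} gives
$$
\mathrm{Lie}(L^+\calH_\fkf) = \bigl(\fkt_H\otimes_{\bbC}\bbC[\![u]\!]\bigr) \,\oplus\, \bigoplus_{\substack{a'=\alpha'+m'\in \Phi_{H,\af}\\ a'(\fkf)\geq 0}} \bbC\, X_{\alpha'} u^{em'},
$$
whose obvious $\bbC$-linear complement inside $\mathrm{Lie}(LH)=\fkh\otimes_{\bbC}\bbC(\!(u)\!)$ is
$$
\mathcal{C}_H := \bigl(\fkt_H\otimes u^{-1}\bbC[u^{-1}]\bigr) \,\oplus\, \bigoplus_{\substack{a'\in\Phi_{H,\af}\\ a'(\fkf)<0}} \bbC\, X_{\alpha'} u^{em'}.
$$
This splitting $\mathrm{Lie}(LH) = \mathrm{Lie}(L^+\calH_\fkf) \oplus \mathcal{C}_H$ is $\sigma$-stable: $\sigma$ preserves $\fkt_H$, it preserves the decomposition $\bbC(\!(u)\!) = \bbC[\![u]\!] \oplus u^{-1}\bbC[u^{-1}]$ since $\tau(u)=\zeta u$, and the condition $a'(\fkf)<0$ is $\sigma$-invariant because the image of $\fkf$ in $\calA(H,T_H,F')$ is $\sigma$-stable by construction.

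Next I would take $\sigma$-invariants of this splitting, obtaining
$$
T_e\Fl_{G,\fkf} \;\cong\; \mathcal{C}_H^\sigma \;=\; \bigl(\fkt_H\otimes u^{-1}\bbC[u^{-1}]\bigr)^\sigma \,\oplus\, \Bigl(\bigoplus_{a'(\fkf)<0} \bbC\, X_{\alpha'} u^{em'}\Bigr)^{\!\sigma}.
$$
The argument of Proposition \ref{prop: basis of invariants} goes through verbatim on the $\sigma$-stable subset $\{a'\in\Phi_{H,\af}: a'(\fkf)<0\}$, producing a basis $\{e_a\}$ of the second summand indexed by $a\in \Phi_\af$ with $a(\fkf)<0$; matching the conditions $a'(\fkf)<0$ and $a(\fkf)<0$ for $a'\in C_a$ is immediate from the simplicial embedding $\calA(G,S,F)\hookrightarrow \calA(H,T_H,F')$. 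This yields the claimed isomorphism.

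The main expected obstacle is the non-formal input in the first step, namely the identity $\mathrm{Lie}(L^+\calG_\fkf) = (\mathrm{Lie}(L^+\calH_\fkf))^\sigma$ as subspaces of $\mathrm{Lie}(LH)^\sigma = \mathrm{Lie}(LG)$. In the present setting---characteristic zero residue field and $|\sigma| \in \{1,2,3\}$---this follows from the standard tame-descent construction of parahoric group schemes (cf.\ \cite{PR,PR1}), where $\sigma$-fixed subschemes of smooth schemes remain smooth and Lie algebras commute with taking fixed points. Once this is granted, the remainder of the argument is a routine bookkeeping exercise in affine root combinatorics combined with a direct application of Proposition \ref{prop: basis of invariants}.
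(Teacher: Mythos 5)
Your outline is essentially the same as the paper's proof: both identify $T_e\Fl_{G,\fkf}$ with a $\sigma$-stable complement of $\mathrm{Lie}(L^+\calH_\fkf)$ in $\mathrm{Lie}(LH)$ and then apply Proposition~\ref{prop: basis of invariants} to produce the basis $\{e_a\}$. The one point where your write-up is thin is the very first claim---that the tangent space of the partial affine flag variety at $e$ can be computed as a complement of $\mathrm{Lie}(L^+\calG_\fkf)$ inside $\mathrm{Lie}(LG)$. For ind-schemes this is \emph{not} formal: $LG\to\Fl_{G,\fkf}$ is not a morphism of finite-type schemes, and the identification of $T_e\Fl_{G,\fkf}$ with $\mathrm{Lie}(LG)/\mathrm{Lie}(L^+\calG_\fkf)$ requires knowing that the ``negative loop group'' $L^{--}\calH_\fkf$ (whose Lie algebra is precisely your $\mathcal{C}_H$) embeds as an open subscheme around $e$, and that this open cell together with its tangent-space description descends through $\sigma$. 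This is exactly the content of [HLR, Cor.~3.9], which the paper invokes. You flag tame descent for the parahoric Lie algebra as ``the main expected obstacle,'' but that is not the delicate part; the delicate part is the big-cell open immersion for $L^{--}$ and its $\sigma$-equivariance, and your proposal neither isolates nor cites a reference for it. With that one citation supplied (or the open-cell statement proved), the rest of your argument---stability of the splitting under $\sigma$, the computation of $\mathcal{C}_H^\sigma$, and the matching of the conditions $a'(\fkf)<0$ and $a(\fkf)<0$---is correct and parallels the paper closely, only replacing the paper's Iwahori-to-parahoric intersection over $W_{H,\fkf}$ with a direct Bruhat--Tits description of $\mathrm{Lie}(L^+\calH_\fkf)$.
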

\begin{proof}
	By \cite[Corollary 3.9]{HLR}, we have an isomorphism $T_e{\Fl_{G,\fkf}}\cong (T_eL^{--}\calH_{\fkf})^\sigma$, where $L^{--}\calH_{\fkf}$ denotes the negative loop group as defined in \cite[\S3.3]{HLR}. Let $\fka'\subset \calA(H,T_H,F')$ be an alcove whose closure contains $\fkf$ with associated Iwahori group scheme $\calH_{\fka'}$. Then by definition of $L^{--}\calH_{\fkf}$ there is an isomorphism $$T_eL^{--}\calH_{\fkf}\cong \bigcap_{w\in W_{H,\fkf}}{^w(T_eL^{--}\calH_{\fka'}}).$$
	Here $W_{H,\fkf}$ denotes the subgroup of the Iwahori Weyl group $W_H$ for $H_{F'}$ generated by the simple reflections fixing $\fkf$. Recall we identify $\Phi_{H,\af}$ with the set of pairs $(a',m), a'\in \Phi_H, m\in \bbZ$ using the choice of absolutely special vertex $\fks_0$.
	We have an isomorphism 
	$$
	T_eL^{--}\calH_{\fka'}\cong  \left(\bigoplus_{a'=\alpha'+m\in \Phi_{H,\af},a'(\fka')<0}\bbC X_{\alpha'+m}  \right)\oplus (\fkt_H\otimes u^{-1}\bbC[u^{-1}]).
	$$
	Since 
	$$
	\{a'\in \Phi_{H,\af}, a'(\fkf)<0\}=\bigcap_{w\in W_{H,\fkf}} \{wa'\in \Phi_{H,\af}, a'(\fka')<0\},
	$$
	 we have an isomorphism 
$$
T_{e}\Gr_{\calG}\cong\left(\bigoplus_{a'=\alpha'+m\in \Phi_{H,\af},a'(\fkf)<0}\bbC X_{\alpha'}u^m\right)^\sigma \oplus(\fkt_H\otimes u^{-1}\bbC[u^{-1}])^\sigma.
$$
As in Proposition \ref{prop: basis of invariants}, we find that $\{e_a\}_{a\in \Phi_{\af},a(\fkf)<0}$ is a basis for the vector space $(\bigoplus_{a'=\alpha'+m\in \Phi_{H,\af},a'(\fkf)<0}\bbC X_{\alpha'}u^m)^\sigma$. Here we use the fact that $\fkf$ is $\sigma$-invariant in order to show that the condition $a(\fkf)<0$ for $a\in \Phi_{\af}$ is equivalent to the condition that $a'(\fkf)<0$, for $a'=\alpha'+m\in \Phi_{H,\af}$ attached to $a'$ as in \S\ref{para: affine root gps}.  The result follows. \end{proof}

\section{Root curves in  twisted affine Grassmannians}

\subsection{} We keep the notations and assumptions of the previous section so that $G$ is an adjoint absolutely simple reductive group, but we will specialize the discussion to the case of twisted affine Grassmannians. Thus let $\fks\in \calA(G,S,F)$ be a fixed special vertex (note that $\fks$ is absolutely special unless $G$ is of type $C-BC_n$). We write $\calG=\calG_{\fks}$ for the parahoric group scheme over $\calO_F$, and let  $\Gr_{\calG}=\Fl_{G,\fkf}$ be the twisted affine Grassmannian associate to $\fks$. 
The choice of special vertex induces an identification $\calA(G,S,F)\cong X_*(T)_I\otimes_{\bbZ} \bbR$ taking $\fks$ to 0.  We fix $\fka\subset \calA(G,S,F)$ the alcove which is contained in the anti-dominant chamber under this identification.

Let $N$ denote the centralizer of $T$ and let $\calT$ denote the connected N\'eron model of $T$ over $\calO_F$. The Iwahori Weyl group is defined to be the quotient 
$$
W:=N(F)/\calT(\calO_F).
$$
 There is a natural short exact sequence
\[
\xymatrix{1\ar[r] &X_*(T)_I\ar[r] & W \ar[r] & W_0\ar[r] & 1},
\]
where $W_0:=N(F)/T(F)$ is the spherical Weyl group. 
For each $\lambda\in X_*(T)_{I}$, we write $t^\lambda$ for the corresponding translation element in $W$. We also write $t^\lambda\in \Gr_{\calG}(\bbC)$ for the image of an element $\tilde{t}^\lambda\in N(F)$ lifting $t^\lambda$; this point is independent of the choice  of $\tilde{t}^\lambda$.

 The affine Weyl group $W_a\subset W$ is defined to be the subgroup generated by the set of simple reflection in the walls of $\fka$. Then we have exact sequences
\[
\xymatrix{1\ar[r] &X_*(T_{\mathrm{sc}})_I\ar[r] & W_a \ar[r] & W_0\ar[r] & 1}
\]
where $T_{\mathrm{sc}}$ is the preimage of $T$ in the simply connected cover of the derived group of $G$.
The choice of alcove $\fka$ determines a length function and Bruhat order on $W$. 
The choice of  special vertex $\fks$ determines splittings of the above exact sequences, and hence semi-direct product decompositions 
$$
W\cong X_*(T)_I\rtimes W_0,\ \ \ \ \ W_a\cong X_*(T_{\mathrm{sc}})_I\rtimes W_0.
$$
There is a reduced root system $\Sigma$ (the \'echelonnage root system) such that 
$$
W_a\cong Q(\Sigma)\times W(\Sigma)
$$ 
where $Q(\Sigma)$  (resp. $W(\Sigma)$) denotes the coroot lattice (resp. Weyl group) for $\Sigma$.

\subsection{}\label{para: echelonnage} By \cite[Theorem B]{HainesDuality}, $\Sigma$ has the following description.  We write $N_I:X^*(T_H)\rightarrow X^*(T_H)$ for the norm map $\alpha\mapsto \sum_{\alpha'\in I\alpha}\alpha'$ and let $\Delta_H$ denote the set of simple roots for $H$ determined by $B_H$.  For $\alpha \in \Delta_H$,
we let $N_I'(\alpha)$ be the modified norm, defined as 
$$
N_I'(\alpha)=\begin{cases} N_I(\alpha) &\text{if all elements of $I\alpha$ are orthogonal,}\\
2N_I(\alpha)&  \text{otherwise.}\end{cases}
$$
Then $\Sigma$ has a basis given by the image of $\Delta_H$ under $N_I'$.

For $\alpha\in \Sigma$ and $k\in \bbZ$, we have the affine function $\alpha+k$ on $\calA(G,S,F)$; here by convention we have $\alpha(\fks)=0$ for $\alpha\in \Sigma$. The  decomposition of $\calA(G,S,F)$ into facets is given by the vanishing hyperplanes of these functions. We write $\Phi_{\af}(\Sigma)=\{\alpha+k|\alpha\in \Sigma,k\in \bbZ\}$; then we obtain an identification $$\psi_{\fks}:\Phi_{\af}(\Sigma)\xrightarrow{\sim}\Phi_{\af}$$ which takes $\alpha+k$ to the affine root  which vanishes on the affine hyperplane $\alpha+k=0$. For $a=\alpha+k\in \Phi_{\af}(\Sigma)$, we have $\calU_{\psi_{\fks}(a)} \subset L^+\calG$ if and only if $k\geq 0$.

 If $\fks=\fks_0$, we have the following relation between the description of $\Phi_{\af}$  given in terms of $\Sigma$ and the description given in terms of the relative roots $\Phi$. Note that for each $\alpha\in \Phi$, there is a unique element $\nu(\alpha)$ of $\Sigma$ proportional to it.
 
 Case (1): Let $\alpha \in \Phi$ with $\frac{1}{2}\alpha,2\alpha\notin \Phi$.  For $a=\alpha+m\in \Phi_{\af}$, we have $\nu(\alpha)=d_\alpha \alpha$ and $\psi_{\fks_0}^{-1}(a)=\nu(\alpha)+d_\alpha m$.
 
 Case (2): Let $\alpha,2\alpha\in \Phi$. For $a=2\alpha+m$, we have $2m$ is an odd integer, and $\psi_{\fks_0}^{-1}(a)=\nu(\alpha)+2m.$ For $a=\alpha+m$, $m\in \frac{1}{2}\bbZ$, we have $\psi_{\fks_0}^{-1}(a)=\nu(\alpha)+4m.$

\subsection{} For $a=\alpha+k\in \Phi_{\af}(\Sigma)$, we  write $\calU_a$ for the affine root subgroup $\calU_{\psi_{\fks}(a)}$.     Then by our assumptions $\calU_a\subset L^+\calG$ if and only if $k\geq 0$. We write $\langle\ ,\ \rangle$ for the pairing between $X_*(T)_{I}\otimes_{\bbZ}\bbR$ and the root lattice associated to $\Sigma$. Then for $\lambda\in X_*(T)_I$ and $a=\alpha+k\in \Phi_{\af}(\Sigma)$, we have 
\begin{equation}\label{eqn: translation elt action}
 t^\lambda\calU_at^{-\lambda}=\calU_{a+\langle \lambda,\alpha\rangle.}
\end{equation}

For each $a\in \Phi_{\af}(\Sigma)$, we define a morphism $f_a:\bbA^1\rightarrow \Gr_{\calG}$ by taking the composition 
$$
\bbA^1\xrightarrow{\tilde{f}_{\psi_{\fks}(a)}}\calU_{\psi_{\fks(a)}}\rightarrow LG\rightarrow \Gr_{\calG}.
$$
We will call (a translate of) $f_a$ a \emph{root curve}.
A simple calculation in $\SL_2$ or $\mathrm{SU}_3$ using the explicit description of the affine root groups $\calU_a$ gives the following proposition (cf. \cite[Prop. 6.2.5]{KZ}).

\begin{prop}\label{prop: SL_2, U_3 calc}
	Let $a=\alpha-k\in \Phi_{\af}(\Sigma)$ with $\alpha\in \Sigma,k \in \bbZ$. Then $f_a(0)=e$, where $e$ is the base point of $\Gr_{\calG}$,  and we have 
	$$
	f_a(x)\in \calU_{-a}t^{-k\alpha^\vee}L^+\calG/L^+\calG,
	$$
	 for $x\in \bbA^1\setminus \{0\}$. Here $\alpha^\vee \in X_*(T_{\sc})_I$ corresponds to the coroot for $\alpha$.\qed
\end{prop}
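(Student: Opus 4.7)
The claim $f_a(0)=e$ is immediate: $\tilde{f}_{\psi_{\fks}(a)}$ is a morphism of group schemes sending $0$ to the identity of $LG$, which projects to the base point $e\in \Gr_{\calG}$.

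For the second statement, my plan is to reduce to an explicit rank-one matrix calculation. As in the proof of Proposition \ref{prop: Cartan direction}, the affine root groups $\calU_{\pm a}$ and the translation $t^{-k\alpha^\vee}$ all lie inside the loop group of a rank-one $F$-subgroup $G_\alpha \subset G$, which is either $\Res_{K/F}\SL_2$ for some finite extension $K/F$ (Case 1 of \S\ref{para: affine root gps}) or the quasi-split unitary group $\SU_3$ attached to a ramified quadratic extension of $F$ (Case 2). Since the natural map $LG_\alpha \to LG$ is compatible with the parahoric subgroups, it will suffice to exhibit the factorization inside $G_\alpha$ and then push forward.

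In the $\SL_2$-case, letting $u'$ denote a uniformizer of $K$ as in the proof of Proposition \ref{prop: Cartan direction}, $\tilde{f}_a(x)$ is represented by the upper-unipotent matrix with $(1,2)$-entry $xu'^{-k}$, and my plan is to verify directly the matrix identity
\[
\begin{pmatrix} 1 & xu'^{-k} \\ 0 & 1 \end{pmatrix} = \begin{pmatrix} 1 & 0 \\ x^{-1}u'^{k} & 1 \end{pmatrix}\begin{pmatrix} u'^{-k} & 0 \\ 0 & u'^{k} \end{pmatrix}\begin{pmatrix} u'^{k} & x \\ -x^{-1} & 0 \end{pmatrix},
\]
in which (for $k\geq 0$) the first factor lies in $\calU_{-a}(\bbC)$, the diagonal factor represents $t^{-k\alpha^\vee}$, and the third factor lies in $\SL_2(\calO_K)\subset L^+\calG$ (its entries all lie in $\calO_K$ and its determinant equals $1$). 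The $\SU_3$ case I would handle by an analogous explicit computation inside the $3\times 3$-matrix model used in Proposition \ref{prop: Cartan direction}, treating affine roots of the forms $\alpha+m$ (Case 2a) and $2\alpha+m$ (Case 2b) separately.

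The only real obstacle is bookkeeping: I must match the normalization of the coroot $\alpha^\vee \in X_*(T_{\mathrm{sc}})_I$ and of the integer $k$ with the dictionary of \S\ref{para: echelonnage} between the relative root system $\Phi$ and the \'echelonnage system $\Sigma$, in particular accounting for the half-integer shifts and the factor-of-two discrepancies that arise in Case 2.
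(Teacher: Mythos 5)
Your proof is correct and follows exactly the route the paper gestures at: the paper gives no written argument, stating only that the result is ``a simple calculation in $\SL_2$ or $\mathrm{SU}_3$'' and citing \cite[Prop.~6.2.5]{KZ}. Your $\SL_2$ identity is right (the three factors multiply to the upper-unipotent element, the first lies in $\calU_{-a}$ since its $(2,1)$-entry is $x^{-1}u'^{k}$ with $k\geq 0$, the middle is $t^{-k\alpha^\vee}$, and the last is in $\SL_2(\calO_K)$ with determinant $1$), and your remark about matching the normalizations of \S\ref{para: echelonnage} is the one nontrivial bookkeeping point; note in particular that for the $\SU_3$ case you must split according to the parity of $k$, since $\psi_{\fks_0}(\alpha-k)$ falls into Case~(2a) when $k$ is even and Case~(2b) when $k$ is odd.
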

\subsection{} 
Now let $X_*(T)_I^+$ denote the set of dominant elements in $X_*(T)_I$ with respect to $B$. For $\mu\in X_*(T)_I^+$, we write $\Gr_{\calG,\mu}$ for the open Schubert cell given by the $L^+\calG$-orbit of $t^\mu$ in $\Gr_{\calG}$ , and we write $\Gr_{\calG,\lleq\mu}$ for its closure. Let $\lleq$ denote the dominance order on $X_*(T)_I^+$ with respect to $\Sigma$, i.e. $\lambda\lleq\mu$ if and only if $\mu-\lambda$ is an integral linear combination of positive coroots for $\Sigma$ with non-negative coefficients. By \cite[Corollary 1.8]{Richarz}, this coincides with the Bruhat order under the identification
$W_K\backslash W/W_K\cong X_*(T)_I^+$; here $W_K\subset W$ is the subgroup generated by the simple reflections fixing $\fks$. In particular $\Gr_{\calG,\lleq\lambda}\subset \Gr_{\calG,\lleq\mu}$ if and only if $\lambda\lleq\mu$.

Fix $\mu,\lambda \in X_*(T)_I^+$ with $\mu \ggeq\lambda$.  For an element $\nu\in X_*(T)_I$, we write $\nu_{\dom}\in X_*(T)_I^+$ for the dominant representative of $\nu$. Then for $\alpha\in \Sigma$, we define
$$
k_\alpha=k_\alpha^{(\lambda,\mu)}:=\max\{k\in \bbZ\, |\, (\lambda-k\alpha^\vee)_{\dom}\lleq \mu\}.
$$

\begin{prop}\label{prop: root curve bound relation}Let $\alpha\in \Sigma$.
	\begin{enumerate}\item We have $k_\alpha=k_{-\alpha}+\langle \lambda,\alpha\rangle$.
		\item For $1\leq k\leq k_\alpha$, the root curve $t^\lambda f_{\alpha-k}:\bbA^1\rightarrow \Gr_{\calG}$ has image contained in $\Gr_{\G,\lleq\mu}$ and maps $0\in \bbA^1$ to the point $t^\lambda$.
		\end{enumerate}
\end{prop}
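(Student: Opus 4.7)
For part (1), I would invoke the Weyl-group invariance of the dominant representative: $(w\nu)_{\dom} = (\nu)_{\dom}$ for any $w \in W_0$. Applying $s_\alpha$ to $\lambda - k\alpha^\vee$ yields
$s_\alpha(\lambda - k\alpha^\vee) = \lambda - \langle\lambda, \alpha\rangle \alpha^\vee + k\alpha^\vee = \lambda - (k - \langle\lambda,\alpha\rangle)(-\alpha^\vee)$,
so the condition $(\lambda - k\alpha^\vee)_{\dom} \lleq \mu$ defining $k_\alpha$ becomes, under the substitution $k' := k - \langle\lambda, \alpha\rangle$, exactly the condition $(\lambda - k'(-\alpha^\vee))_{\dom} \lleq \mu$ defining $k_{-\alpha}$. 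This gives $k_\alpha = k_{-\alpha} + \langle\lambda, \alpha\rangle$.

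For part (2), the equality $t^\lambda f_{\alpha-k}(0) = t^\lambda$ is immediate from $f_{\alpha-k}(0) = e$ (Proposition~\ref{prop: SL_2, U_3 calc}). To show the image of $t^\lambda f_{\alpha-k}$ is in $\Gr_{\calG, \lleq \mu}$, I would split into two regimes according to the sign of $k - \langle\lambda, \alpha\rangle$. When $k \leq \langle\lambda, \alpha\rangle$, formula \eqref{eqn: translation elt action} gives $t^\lambda \calU_{\alpha - k} t^{-\lambda} = \calU_{\alpha + (\langle\lambda,\alpha\rangle - k)}$, whose constant part is non-negative so this root group lies in $L^+\calG$. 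Consequently $t^\lambda f_{\alpha-k}(\bbA^1) \subset L^+\calG \cdot t^\lambda L^+\calG/L^+\calG = \Gr_{\calG, \lambda}$, which sits in $\Gr_{\calG, \lleq \mu}$ by the hypothesis $\lambda \lleq \mu$. When $k > \langle\lambda, \alpha\rangle$, Proposition~\ref{prop: SL_2, U_3 calc} places $f_{\alpha-k}(x) \in \calU_{-\alpha + k} t^{-k\alpha^\vee} L^+\calG/L^+\calG$ for $x \neq 0$; conjugating the root group through $t^\lambda$ using \eqref{eqn: translation elt action} applied to $-\alpha$ yields $t^\lambda f_{\alpha-k}(x) \in \calU_{-\alpha + (k - \langle\lambda,\alpha\rangle)} \, t^{\lambda - k\alpha^\vee} L^+\calG/L^+\calG$. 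Since $k - \langle\lambda, \alpha\rangle \geq 1$, the root group is again in $L^+\calG$, giving $t^\lambda f_{\alpha-k}(x) \in \Gr_{\calG, (\lambda - k\alpha^\vee)_{\dom}} \subset \Gr_{\calG, \lleq \mu}$ by the very definition of $k_\alpha$.

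I do not anticipate any serious obstacle; the only subtlety is keeping track of the sign of the constant part of the affine root after conjugation by $t^\lambda$, which is exactly what forces the split into two regimes. Once the appropriate regime is identified, the conclusion follows mechanically from \eqref{eqn: translation elt action} and Proposition~\ref{prop: SL_2, U_3 calc}.
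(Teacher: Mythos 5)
Your part (1) and the first regime of part (2) ($k\leq\langle\lambda,\alpha\rangle$) are correct and match the paper exactly. However, there is a genuine gap in the second regime: you assert that $(\lambda-k\alpha^\vee)_{\dom}\lleq\mu$ for $\langle\lambda,\alpha\rangle<k\leq k_\alpha$ ``by the very definition of $k_\alpha$,'' but the definition $k_\alpha=\max\{k\in\bbZ\,|\,(\lambda-k\alpha^\vee)_{\dom}\lleq\mu\}$ only guarantees the condition at $k=k_\alpha$ itself. A priori the set $\{k:(\lambda-k\alpha^\vee)_{\dom}\lleq\mu\}$ could fail to be an interval, so the claim for intermediate $k$ requires proof.

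The missing step is the convexity argument in the paper: for $\langle\lambda,\alpha\rangle\leq k\leq k_\alpha$, the element $\lambda-k\alpha^\vee$ lies on the segment $[\lambda-k_\alpha\alpha^\vee,\,s_\alpha(\lambda-k_\alpha\alpha^\vee)]$ (the second endpoint being $\lambda-(\langle\lambda,\alpha\rangle-k_\alpha)\alpha^\vee$, and $\langle\lambda,\alpha\rangle-k_\alpha\leq\langle\lambda,\alpha\rangle\leq k\leq k_\alpha$). Both endpoints lie in the $W_0$-orbit of $(\lambda-k_\alpha\alpha^\vee)_{\dom}$, so the segment lies in the convex hull of that orbit, giving $(\lambda-k\alpha^\vee)_{\dom}\lleq(\lambda-k_\alpha\alpha^\vee)_{\dom}\lleq\mu$. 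Without this argument (or an equivalent one), the conclusion $t^\lambda f_{\alpha-k}(x)\in\Gr_{\calG,\lleq\mu}$ does not follow for intermediate $k$.
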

\begin{proof}
(1)	Let $s_\alpha\in W_0$ denote the simple  reflection corresponding to $\alpha$ which acts on $X_*(T)_{I}\otimes_{\bbZ}\bbR$. Then we have \begin{align*}s_\alpha(\lambda-k\alpha^\vee)&=\lambda+k\alpha^\vee-\langle \lambda,\alpha\rangle\alpha^\vee\\ &=\lambda -(k-\langle \lambda,\alpha\rangle)(-\alpha^\vee)\end{align*}
	for any $k\in \bbZ$. 
	
	Since $s_\alpha(\lambda-k\alpha^\vee)_{\dom}=(\lambda-k\alpha^\vee)_\dom$, we have $(\lambda-k\alpha^\vee)_\dom\leq \mu$ if and only if $(\lambda -(k-\langle \lambda,\alpha\rangle)(-\alpha^\vee))_{\dom}\leq \mu$, hence (1) follows.
	
	(2) By Proposition \ref{prop: SL_2, U_3 calc}, we have  $t^\lambda f_{\alpha-k}(0)=t^\lambda$. For the first statement, we first assume $1\leq k\leq \langle \lambda,\alpha\rangle $; note this only occurs when $\alpha$ is a positive root. Then $$t^\lambda f_{\alpha-k}(x)\in \calU_{\alpha- k+\langle\lambda,\alpha\rangle} t^\lambda L^+\calG/L^+\calG\subset \Gr_{\calG, \lleq\lambda}$$ and hence $t^\lambda f_{\alpha-k}$ has image in $\Gr_{\calG, \lleq\mu}$ since $\lambda\lleq\mu$.

Now assume $\langle\lambda,\alpha\rangle \leq k\leq k_\alpha$.	By Proposition \ref{prop: SL_2, U_3 calc}, we have 
$t^\lambda f_{\alpha-k}(x)\in t^\lambda \calU_{-\alpha+k}t^{-k\alpha^\vee}L^+\G/L^+\calG$ for $x\in \bbA^1\setminus\{0\}$,  and hence by \eqref{eqn: translation elt action} $$
t^\lambda f_{\alpha-k}(x)
\in \calU_{-\alpha+k-\langle\lambda,\alpha\rangle}t^{\lambda-k\alpha^\vee}L^+\G/L^+\G.
$$
By assumption $k-\langle\lambda,\alpha\rangle \geq 0$. Thus $\calU_{-\alpha+k-\langle\lambda,\alpha\rangle}\subset L^+\calG$ and  we have 
$$
t^\lambda f_{\alpha-k}(x)\in \Gr_{\calG,  \lleq( \lambda-k\alpha^\vee )_{\dom}}
$$
 for $x\in \bbA^1\setminus\{0\}$.  Now $\lambda-k\alpha^\vee$ lies in the interval $[\lambda-k_\alpha\alpha^\vee,s_\alpha(\lambda-k_\alpha\alpha^\vee)]$,  and hence is contained the convex hull of the $W_0$-orbit of $(\lambda-k_\alpha\alpha^\vee)_{\dom}$. Thus we have 
 $$
 (\lambda-k\alpha^\vee)_{\dom}\lleq(\lambda-k_\alpha\alpha^\vee)_{\dom}\lleq \mu
 $$ 
 and the result follows.
\end{proof}

\subsection{} The root curves above allow us to obtain the following bound on dimensions of tangent spaces to Schubert varieties. Using the isomorphism $\Gr_{\calG}\xrightarrow{\sim}\Gr_{\calG}$ given by multiplication by $t^\lambda$ which takes $e$ to $t^\lambda$, we obtain an identification of tangent spaces $$T_e\Gr_{\calG}\cong T_{t^\lambda}\Gr_{\calG}.$$
Thus by Proposition \ref{prop: tangent space AFV}, we have an identification 
\begin{equation}
\label{eqn: id tangent space t^l}T_{t^\lambda}\Gr_{\calG}\cong \left(\bigoplus_{a\in\Phi_{\af}, a(\fks)<0}\bbC e_a\right)\oplus(\fkt_H\otimes u^{-1}\bbC[u^{-1}])^\sigma.
\end{equation}

For a subspace $V\subset T_{t^\lambda}\Gr_{\calG}$, we write $V^{\mathrm{root}}$ for the intersection of $V$ with the subspace $\bigoplus_{a\in\Phi_{\af}, a(\fks)<0}\bbC e_a\subset T_{t^\lambda}\Gr_{\calG}$.

\begin{cor}\label{cor: tangent dimension bound} Let $\lambda,\mu\in X_*(T)^+_{I}$ with $\lambda\lleq\mu$. 
	$$
	\dim (T_{t^\lambda}\Gr_{\calG,\lleq\mu})^{\mathrm{root}}\geq \sum_{\alpha\in \Sigma} k_\alpha^{(\lambda,\mu)}.
	$$
\end{cor}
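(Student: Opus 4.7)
The plan is to produce $\sum_{\alpha\in\Sigma}k_\alpha^{(\lambda,\mu)}$ explicit linearly independent tangent vectors in $(T_{t^\lambda}\Gr_{\calG,\lleq\mu})^{\mathrm{root}}$, each coming from the root curves of Proposition \ref{prop: root curve bound relation}.

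First I would fix $\alpha\in\Sigma$ and $1\leq k\leq k_\alpha^{(\lambda,\mu)}$ and consider the composition $t^\lambda\circ f_{\alpha-k}\colon \bbA^1\to \Gr_{\calG}$. By Proposition \ref{prop: root curve bound relation}(2) this morphism factors through $\Gr_{\calG,\lleq\mu}$ and sends $0$ to $t^\lambda$, so it produces a tangent vector $v_{\alpha,k}\in T_{t^\lambda}\Gr_{\calG,\lleq\mu}$.

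Next I would identify $v_{\alpha,k}$ explicitly under the isomorphism \eqref{eqn: id tangent space t^l}. Since left multiplication by $t^\lambda$ is an isomorphism of $\Gr_{\calG}$, translating by $t^{-\lambda}$ identifies $v_{\alpha,k}$ with the derivative at $0$ of the curve $f_{\alpha-k}=\tilde f_{\psi_{\fks}(\alpha-k)}$ composed with $LG\to \Gr_{\calG}$. The maps $\tilde f_a$ were constructed in \S2.3 so that their derivative at $0$ equals the generator $e_a$ of the Lie algebra of the affine root subgroup $\calU_a$ (this is immediate in Case (1) from the formula $r\mapsto ru^{em}$, and a direct check of the formulas in Cases (2a)--(2b)). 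Consequently, under \eqref{eqn: id tangent space t^l},
\[
v_{\alpha,k} \;=\; e_{\psi_{\fks}(\alpha-k)}\qquad\text{up to a non-zero scalar}.
\]
In particular, since $(\alpha-k)(\fks)=-k<0$, each $v_{\alpha,k}$ lies in the root part $(T_{t^\lambda}\Gr_{\calG,\lleq\mu})^{\mathrm{root}}$.

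Finally, the distinct pairs $(\alpha,k)$ with $\alpha\in\Sigma$ and $1\leq k\leq k_\alpha^{(\lambda,\mu)}$ yield distinct elements $\alpha-k\in \Phi_{\af}(\Sigma)$, hence distinct affine roots $\psi_{\fks}(\alpha-k)\in\Phi_{\af}$ via the bijection $\psi_{\fks}$. By Proposition \ref{prop: basis of invariants} (and Proposition \ref{prop: tangent space AFV}), the vectors $\{e_a\}_{a\in\Phi_{\af},\,a(\fks)<0}$ are linearly independent, so the $v_{\alpha,k}$ span a subspace of $(T_{t^\lambda}\Gr_{\calG,\lleq\mu})^{\mathrm{root}}$ of dimension exactly $\sum_{\alpha\in\Sigma}k_\alpha^{(\lambda,\mu)}$, giving the claimed inequality. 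There is no real obstacle here beyond keeping track of the identifications; the content is entirely packaged in Propositions \ref{prop: tangent space AFV} and \ref{prop: root curve bound relation}.
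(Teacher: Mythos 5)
Your proof is correct and follows essentially the same route as the paper: translate by $t^\lambda$, identify the derivative of each root curve $t^\lambda f_{\alpha-k}$ with the basis vector $e_{\psi_{\fks}(\alpha-k)}$ under the isomorphism \eqref{eqn: id tangent space t^l}, observe via Proposition \ref{prop: root curve bound relation} that this vector lies in $(T_{t^\lambda}\Gr_{\calG,\lleq\mu})^{\mathrm{root}}$ for $1\leq k\leq k_\alpha$, and sum. Your explicit remark that distinct pairs $(\alpha,k)$ give distinct affine roots, hence linearly independent $e_a$'s by Proposition \ref{prop: basis of invariants}, is the (correct) justification the paper leaves implicit when it says ``summing over all $\alpha\in\Sigma$ gives the result.''
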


\begin{proof}
	Using the bijection $\psi_\fks:\Phi_{\af}(\Sigma)\cong \Phi_{\af}$, we can rewrite \eqref{eqn: id tangent space t^l} as  
	$$
	T_{t^\lambda}\Gr_{\calG}\cong \left(\bigoplus_{\alpha-k\in\Phi_{\af}(\Sigma), k>0}\bbC e_{\psi_{\fks}{(\alpha-k)}}\right)\oplus(\fkt_H\otimes u^{-1}\bbC[u^{-1}])^\sigma.
	$$
	By the construction of the isomorphism in Proposition \ref{prop: tangent space AFV} and the computations in \S\ref{para: affine root gps}, the one-dimensional subspace of $T_{t^\lambda}\Gr_{\calG}$ spanned by the tangent space of the root curve $t^\lambda f_{\alpha-k}$ is $\bbC e_{\psi_{\fks}(\alpha-k)}$. By Proposition \ref{prop: root curve bound relation}, for $1\leq k\leq k_\alpha$, this lies in the subspace $(T_{t^\lambda}\Gr_{\calG,\lleq\mu})^{\mathrm{root}}$. Summing over all $\alpha\in \Sigma$ gives the result.
	
	\end{proof}

\section{Tangent spaces for minimal degenerations}\label{sec: min deg}

\subsection{} \label{subsec: min deg}

The bound obtained in Corollary \ref{cor: tangent dimension bound} can be made more explicit when applied to boundary components of minimal degenerations. 
We keep the notation of the previous section so that $\Gr_{\calG}$ is the twisted affine Grassmannian associated to an adjoint absolutely simple reductive group over $F$ and a choice of special parahoric corresponding to a point $\fks\in \calA(G,S,F)$.

For $\mu,\lambda\in X_*(T)^+_{I}$, we say that $\mu\rightsquigarrow \lambda$ is a minimal degeneration if
\begin{enumerate}
	\item 
	$\mu \ggeq\lambda$ and $\mu \neq \lambda$.
	\item If $\nu\in X_*(T)_I^+$ is such that $\mu\ggeq\nu\ggeq\lambda$, then $\mu=\nu$ or $\lambda=\nu$.
\end{enumerate}

For $\lambda\lleq\mu$, we may write $\mu-\lambda$ uniquely as a linear combination of  simple coroots $\alpha^\vee\in \Sigma^\vee\subset X_*(T_{\mathrm{sc}})_I$ with non-negative integral coefficients. We write $\supp(\mu-\lambda)$ for the sub-root system of $\Sigma$ generated  by the roots associated to the set of simple coroots which appear with positive coefficients.

We have the following classification of minimal degenerations due to Stembridge \cite[Theorem 2.8]{Stembridge}.

\begin{prop}\label{prop: Stembridge}
	Let $\mu\rightsquigarrow \lambda$ be a minimal degeneration. Then one of following holds:
	\begin{enumerate} \item $\mu-\lambda$ is a simple coroot.

		\item $\mu-\lambda$ is the short dominant coroot of $\mathrm{supp}(\mu-\lambda)$, and $\lambda=0$ on $\mu-\lambda$.
		
		\item $\mu-\lambda$ is the short dominant coroot of $\mathrm{supp}(\mu-\lambda)$; $\mathrm{supp}(\mu-\lambda)$ is of type $C_n$ and $\lambda$
		 on $\mathrm{supp}(\mu-\lambda)$ is 
		$$
		0\,\rule[0.1cm]{0.4cm}{0.5pt}\,0\,\rule[0.1cm]{0.4cm}{0.5pt}\,\cdots\,\rule[0.1cm]{0.4cm}{0.5pt}\,0\xLeftarrow{\ \ \ }  1\ .
		$$
		
		\item $\supp(\mu-\lambda)$ is of type $G_2$, and $\lambda=(2\xRrightarrow{\ \ \ } 0)$, $\mu=(1\xRrightarrow{\ \ \ } 1)$.
		
				\item $\supp(\mu-\lambda)$ is of type $G_2$, and $\lambda=(1\xRrightarrow{\ \ \ } 0)$, $\mu=(0\xRrightarrow{\ \ \ } 1)$.
		\end{enumerate}
	Here the integer at a vertex of the Dynkin diagram represents the pairing with the simple root corresponding to that vertex.
\end{prop}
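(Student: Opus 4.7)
The plan is to give a purely combinatorial case analysis on the sub-root system $\Sigma' := \supp(\mu-\lambda)$ of $\Sigma$, following Stembridge's original strategy. The governing principle is the following: given any splitting $\mu-\lambda = \eta_1 + \eta_2$ with $\eta_1, \eta_2$ nonzero non-negative integer combinations of simple coroots of $\Sigma'$, the element $\nu := \lambda + \eta_1 = \mu - \eta_2$ automatically lies in the dominance interval $[\lambda,\mu]$ and provides a genuine intermediate dominant coweight unless $\langle\nu,\beta\rangle < 0$ for some simple root $\beta$ of $\Sigma$. The classification is obtained by determining precisely when no such $\nu$ exists.

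First I would reduce to the case that $\Sigma'$ is irreducible. If $\Sigma' = \Sigma_1' \sqcup \Sigma_2'$ splits nontrivially, decompose $\mu-\lambda = \eta_1+\eta_2$ accordingly. Since $\Sigma_1'$ and $\Sigma_2'$ lie in disjoint components of the Dynkin diagram of $\Sigma'$, the simple roots of $\Sigma$ lying in $\Sigma_1'$ are orthogonal to the simple coroots supporting $\eta_2$, so one checks directly that $\nu = \lambda + \eta_1$ remains dominant, contradicting minimality. Hence $\Sigma'$ is irreducible.

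With $\Sigma'$ irreducible, if $\mu-\lambda$ equals a single simple coroot $\alpha^\vee$, we are in case (1). Otherwise $\mu-\lambda = \sum_i n_i \alpha_i^\vee$ with every $n_i \geq 1$ and $\sum_i n_i \geq 2$. The next step is to argue that $\mu-\lambda$ must equal the short dominant coroot $\theta_s^\vee$ of $\Sigma'$: if not, one locates some $j$ with $n_j \geq 1$ such that $\mu-\alpha_j^\vee$ is still dominant (equivalently $\langle\mu,\alpha_j\rangle\geq 2$), yielding an intermediate element. Verifying this forces the coefficient tuple $(n_i)$ to coincide with that of $\theta_s^\vee$, type by type. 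Having fixed $\mu-\lambda = \theta_s^\vee$, one then determines which restrictions of $\lambda$ to $\Sigma'$ admit no further intermediates, either from subtracting positive coroots or from applying a Weyl-conjugation to bring non-dominant elements back into the dominant chamber. This analysis cuts out precisely the configurations (2), (3), (4), (5).

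The main obstacle is the detailed type-by-type analysis in the non-simply-laced systems $B_n, C_n, F_4, G_2$, where $\theta_s^\vee$ may have simple-coroot multiplicities greater than one. Type $C_n$ yields the additional configuration (3), and the small rank but sharp root-length ratio in $G_2$ permits the two exceptional degenerations (4) and (5); in the latter case the classification can be verified by directly enumerating the finitely many dominant coweights bounded by $\mu$.
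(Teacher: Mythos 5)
The paper does not prove this proposition; it is quoted verbatim from Stembridge, \cite[Theorem 2.8]{Stembridge}, and the paper simply cites that result. So there is no internal argument to compare against, and you are attempting a reproof. Your outline follows what is essentially Stembridge's own strategy (localize to $\supp(\mu-\lambda)$, reduce to the irreducible case, then classify), and the localization step can be made to work, though your check is incomplete as written: you verify dominance of $\nu=\lambda+\eta_1$ only against simple roots lying inside $\Sigma'$, and you should also observe that for any simple $\beta\notin\Sigma'$ one has $\langle\eta_2,\beta\rangle\leq 0$ (since $\eta_2$ is a non-negative combination of simple coroots distinct from $\beta^\vee$), hence $\langle\nu,\beta\rangle=\langle\mu,\beta\rangle-\langle\eta_2,\beta\rangle\geq 0$; this is easy to add.

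The genuine gap is the assertion that once $\mu-\lambda$ is not a simple coroot it must equal the short \emph{dominant} coroot $\theta_s^\vee$ of $\Sigma'$, and that the remaining analysis then "cuts out precisely (2), (3), (4), (5)." This is false in cases (4) and (5). Taking $\alpha_1$ short and $\alpha_2$ long in $G_2$, so that $\langle\alpha_1^\vee,\alpha_2\rangle=-3$ and $\langle\alpha_2^\vee,\alpha_1\rangle=-1$, one computes from the data $\lambda=(2,0)$, $\mu=(1,1)$ that $\mu-\lambda=\alpha_1^\vee+\alpha_2^\vee$, exactly as the paper notes in its proof of Proposition 4.2.1. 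This is a short coroot but not a dominant one, since $\langle\alpha_1^\vee+\alpha_2^\vee,\alpha_2\rangle=-1<0$; the short dominant coroot is instead $\theta_s^\vee=\alpha_1^\vee+2\alpha_2^\vee$. So you cannot force $\mu-\lambda=\theta_s^\vee$: the correct intermediate dichotomy is that $\mu-\lambda$ is always a positive coroot, which may fail to be dominant precisely in the two $G_2$ configurations. Relatedly, your step "if $\mu-\lambda\neq\theta_s^\vee$, locate $j$ with $\langle\mu,\alpha_j\rangle\geq 2$" only rules out one family of splittings (by a single simple coroot); its failure does not pin down the coefficient vector of $\mu-\lambda$, and the bulk of the case analysis (particularly establishing that $\mu-\lambda$ is a coroot at all, and isolating (3) in type $C_n$ and (4), (5) in $G_2$) is precisely the hard part that the outline defers. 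As written, the $\theta_s^\vee$ claim would lead the proof astray before that analysis can be carried out.
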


\subsection{} Let $\Sigma^+$ denote the subset of positive roots, and we let $\rho$ denote the half sum of positive roots in $\Sigma$. 

\begin{prop}\label{prop: root curves min deg}
	Let $\mu\rightsquigarrow \lambda$ be a minimal degeneration. Then 
	$$
	\dim (T_{t^\lambda}\Gr_{\calG,\lleq\mu})^{\mathrm{root}}\geq \dim\Gr_{\calG,\lleq\mu}.
	$$
\end{prop}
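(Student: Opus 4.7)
The plan is to combine Corollary~\ref{cor: tangent dimension bound} with Stembridge's classification (Proposition~\ref{prop: Stembridge}) and verify the resulting combinatorial inequality case-by-case. Using the standard formula $\dim\Gr_{\calG,\lleq\mu}=\langle 2\rho,\mu\rangle$ at a special vertex, Corollary~\ref{cor: tangent dimension bound} reduces the statement to $\sum_{\alpha\in\Sigma}k_\alpha^{(\lambda,\mu)}\geq\langle 2\rho,\mu\rangle$. Pairing each $\alpha>0$ with $-\alpha$ and applying Proposition~\ref{prop: root curve bound relation}(1), which gives $k_\alpha+k_{-\alpha}=2k_{-\alpha}+\langle\lambda,\alpha\rangle$, the desired inequality becomes
\[
\sum_{\alpha>0}k_{-\alpha}^{(\lambda,\mu)}\geq\langle\rho,\mu-\lambda\rangle.
\]
Writing $\mu-\lambda=\sum_{\beta_i\in\Delta'}n_i\beta_i^\vee$ in the simple coroots of $\Sigma':=\supp(\mu-\lambda)$, the right-hand side becomes the height $\sum_i n_i$, since $\langle\rho,\beta^\vee\rangle=1$ for every simple coroot of $\Sigma$ and $\Delta'\subset\Delta$.

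I would then verify this height bound in each of the five Stembridge cases, using throughout the polytope characterization $(\nu)_{\dom}\lleq\mu\iff\nu\in\mathrm{conv}(W_0\mu)$ (valid for dominant $\mu$). Case~(1) is immediate: $(\lambda+\beta^\vee)_{\dom}=\mu$ gives $k_{-\beta}\geq 1=\sum_i n_i$. For Case~(2), the hypothesis $\lambda|_{\Delta'}=0$ implies $W(\Sigma')$ fixes $\lambda$, and hence $\lambda+\mathrm{conv}(W(\Sigma')\cdot\theta^\vee)\subset\mathrm{conv}(W_0\mu)$; since $\theta^\vee$ is the highest root of the dual system $\Sigma'^\vee$, the convex hull of its $W(\Sigma')$-orbit contains every root of $\Sigma'^\vee$. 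This yields $k_{-\alpha}\geq 1$ for every $\alpha\in\Sigma'^+$, and so $\sum_{\alpha>0}k_{-\alpha}\geq|\Sigma'^+|$. The required inequality $|\Sigma'^+|\geq\sum_i n_i$ then follows by a brief case-by-case inspection across the irreducible types, and is loose because $|\Sigma'^+|$ is quadratic in rank while the height of $\theta^\vee$ is linear.

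For Case~(3), with $\Sigma'$ of type $C_n$ and $\lambda|_{\Sigma'}=\omega_n^\vee$, I would compute $(\lambda+k\alpha^\vee)_{\dom}$ directly using the signed-permutation action of $W(C_n)$: one finds $k_{-\alpha}\geq 1$ for the $n$ long coroots $\alpha^\vee=e_i$ and for the $\binom{n}{2}$ short coroots $e_i-e_j$ (it may fail for $e_i+e_j$), whence $\sum_{\alpha>0}k_{-\alpha}\geq n+\binom{n}{2}\geq n$, matching the target. Cases~(4) and~(5) concern explicit $(\lambda,\mu)$ in $G_2$ with target $2$; in each case a short direct calculation exhibits two distinct positive roots $\alpha$ for which $(\lambda+\alpha^\vee)_{\dom}=\mu$ after a single reflection (for instance $\alpha=\alpha_1$ together with $\alpha=\alpha_1+3\alpha_2$ suffices in Case~(5)), giving $\sum k_{-\alpha}\geq 2$.

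The main obstacle is the type-by-type verification required in Case~(2), but this amounts to comparing the (tabulated) coefficients of the short dominant coroot $\theta^\vee$ against $|\Sigma'^+|$ in each of types $A$--$G$; for example $\theta^\vee=\alpha_1^\vee+2\alpha_2^\vee+\cdots+2\alpha_{n-1}^\vee+\alpha_n^\vee$ in $B_n$ with height $2n-2\leq n^2$, and analogous checks handle the remaining classical and exceptional types.
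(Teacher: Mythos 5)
Your overall route is the same as the paper's: combine Corollary~\ref{cor: tangent dimension bound} with Proposition~\ref{prop: root curve bound relation}(1) to reduce to $\sum_{\alpha\in\Sigma^+}k_{-\alpha}\geq\mathrm{ht}(\mu-\lambda)$, then verify this against Stembridge's list, and your treatments of Cases (1), (3), (4), (5) are essentially those of the paper. However, your Case~(2) contains a genuine error.

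You assert that $\theta^\vee=\mu-\lambda$, the short dominant coroot of $\Sigma':=\supp(\mu-\lambda)$, is the highest root of $\Sigma'^\vee$ and that the convex hull of its $W(\Sigma')$-orbit therefore contains every root of $\Sigma'^\vee$, concluding $k_{-\alpha}\geq 1$ for all $\alpha\in\Sigma'^+$. Both claims fail whenever $\Sigma'$ is not simply laced: the highest root of an irreducible root system is always long, whereas $\theta^\vee$ is by hypothesis a \emph{short} coroot, i.e.\ the highest short root of $\Sigma'^\vee$, not the highest root. Concretely, take $\Sigma'=B_n$ with $n\geq 2$ (so $\Sigma'^\vee\cong C_n$): then $\theta^\vee=e_1+e_2$ while the highest coroot is $2e_1$. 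Every point of $\mathrm{conv}\bigl(W(B_n)\cdot(e_1+e_2)\bigr)$ has $e_1$-coordinate at most $1$, so $2e_1$ is not in this hull; and indeed, since $\lambda$ vanishes on $\Sigma'$, the $\Sigma'$-component of $(\lambda+2e_1)_{\dom}$ is $2e_1$, and $2e_1\not\lleq e_1+e_2$ in $\Sigma'$ (their difference is $-\alpha_1^\vee$), so $k_{-\gamma}=0$ for the long coroot $\gamma^\vee=2e_1$. Your argument therefore only legitimately yields $k_{-\alpha}\geq 1$ for the short positive coroots of $\Sigma'^\vee$ (precisely the $W(\Sigma')$-orbit of $\theta^\vee$), giving the bound $\sum_{\alpha>0}k_{-\alpha}\geq|\Sigma'^{\vee,+}_{\mathrm{short}}|$, not $|\Sigma'^+|$; the ``case-by-case inspection'' you outline is then verifying the wrong inequality. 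The paper works with the correct smaller count and proves $|\Sigma'^{\vee,+}_{\mathrm{short}}|\geq\mathrm{ht}(\theta^\vee)$ uniformly by a reduced-word argument (a chain $\gamma^\vee,v_1\gamma^\vee,\dots,v_n\gamma^\vee$ of distinct short positive coroots along a minimal word carrying a short simple coroot to $\theta^\vee$), avoiding a type-by-type table. Restricting to short coroots and either redoing your case check with the correct count or adopting the paper's reduced-word argument repairs the proof; as written, the step deriving $\sum k_{-\alpha}\geq|\Sigma'^+|$ is unjustified.
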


\begin{proof}
By \cite[Corollary 2.10]{Richarz}, we have $\dim\Gr_{\calG,\lleq\mu}=\langle\mu,2\rho\rangle$. Thus by Corollary \ref{cor: tangent dimension bound},  it suffices to show 
\begin{equation}
\label{eqn: combinatorial dimension} \sum_{\alpha\in \Sigma}k_\alpha\geq \langle\mu,2\rho\rangle. 
\end{equation}
	By Proposition \ref{prop: root curve bound relation}, we have $k_{\alpha}=k_{-\alpha}+\langle\lambda,\alpha\rangle$ for any $\alpha\in \Sigma$. Substituting this in for each $\alpha \in \Sigma^+$, equation  \eqref{eqn: combinatorial dimension} becomes
	 \begin{equation}\sum_{\alpha\in \Sigma^+}2k_{-\alpha}+\langle\lambda,2\rho\rangle\geq \langle\mu,2\rho\rangle.
	\end{equation}

	We consider the fives cases in Proposition \ref{prop: Stembridge} separately. Note that in each case $\mu-\lambda$ is a positive  coroot in $\Sigma^\vee$ which we call $\beta^\vee$. Hence it suffices to show that 
	$$
	\sum_{\alpha\in \Sigma^+}2k_{-\alpha}\geq \langle \beta^\vee,2\rho\rangle.
	$$
	\begin{enumerate}
		\item  In this case, $\beta$ is a simple root. Then $\langle \beta^\vee,2\rho\rangle=2$ since $\rho$ is equal to the sum of the fundamental weights for $\Sigma$ (cf. \cite[Chap. VI]{Bourbaki}).  Thus it suffices to find  some $\alpha\in \Sigma^+$ with $k_{-\alpha}\geq 1$. It is easy to see that $k_{-\beta}\geq1$ works.

		\item Let $\Sigma_J$ denote the sub-root system  $\supp(\mu-\lambda)$, and $W_J\subset W(\Sigma)$ its Weyl group. If $\gamma^\vee$ is a short positive coroot in $
		\Sigma_J^\vee$, then there is a $w\in W_J$ such that $w(\gamma^\vee)=\beta^\vee$. Then $w(\lambda+\gamma^\vee)=\lambda+\beta^\vee=\mu$ and hence $(\lambda+\gamma^\vee)_{\dom}\lleq\mu$, i.e. $k_{-\gamma}\geq1$. We write $\Sigma_{J,\mathrm{short}}^{\vee,+}$ for the set of short positive coroots in $\Sigma_J^\vee$. Then we have 
		$$
		(\sum_{\alpha\in \Sigma^+} 2k_{-\alpha})\geq 2|\Sigma_{J,\mathrm{short}}^{\vee,+}|
		$$
		and hence it suffices to show that $$2|\Sigma_{J,\mathrm{short}}^{\vee,+}|\geq \langle\beta^\vee,2\rho\rangle.$$
		
		Recall that $\beta^\vee$ is the short dominant coroot in $\Sigma_J$. We write $\beta^\vee$ as a sum of simple coroots which necessarily lie in $\supp(\mu-\lambda)$, and we let  $N_\beta$ be the number of simple coroots appearing (counted with multiplicity). Then $\langle \beta^\vee, 2\rho\rangle=2N_\beta$. We thus reduce to showing $$|\Sigma_{J,\mathrm{short}}^{\vee,+}|\geq N_\beta.$$
Note that this only depends on the root system $\Sigma_J$. 

To show this last inequality, let $\gamma^\vee$ be a short simple coroot and $v$ a minimal length element in $W_J$ such that $v(\gamma^\vee)= \beta^\vee$. We let $s_ns_{n-1}\dotsc s_1$ be a reduced word decomposition for $v$ and write $v_i=s_i\dotsc s_1$ for $i=1,\dotsc,n$. Then $n+1=N_\beta$, and $\gamma^\vee, v_1(\gamma^\vee),\dotsc,v_n(\gamma^\vee)$  give $n+1$ distinct elements of $\Sigma_{J,\mathrm{short}}^{\vee,+}$.

\item Let $\Sigma_J$ denote the sub-root system $\supp(\mu-\lambda)$ which is of type $C_n$. Let $\gamma^\vee$ be a positive short root. Then there is a $w\in W_J$ such that $w(\lambda)=\lambda$ and $w(\gamma^\vee)=\beta^\vee$. We can prove this explicitly as follows.

We identify the coweight lattice of $\Sigma_J$ with  a submodule of $\frac{1}{2}\bbZ^n$. Let $e_1,\dotsc,e_n$ be the standard basis of $\bbZ^n$. Then the coroots are given by$$\pm e_i,i=1,\dotsc,n
\text{ and } \pm e_i\pm e_j, 1\leq i<j\leq n.$$ We choose the ordering so that the positive coroots are given by $e_i,i=1,\dotsc,n$ and $e_i\pm e_j, 1\leq i < j\leq n$. Thus the short positive coroots are given by $e_1,\dotsc,e_n$ with $\beta^\vee= e_1$, and each can be made dominant using $w\in S_n\subset W_J$. By the condition on $\lambda$, the subgroup $S_n$ fixes $\lambda$. Thus $w(\lambda+\gamma^\vee)=\lambda+\beta^\vee=\mu$ and hence $(\lambda+\gamma^\vee)_{\dom}\lleq \mu$, i.e. $k_{-\gamma }\geq1$. The same argument as case (2) proves the desired inequality.

\item[(4)+(5)]Let $\supp(\mu-\lambda)=\{\alpha_1,\alpha_2\}$ with $\alpha_1$ the short root. Then in both cases, we have $$\mu-\lambda=\beta^\vee=\alpha_1^\vee+\alpha_2^\vee.$$ Thus $\langle\beta^\vee,2\rho\rangle=4$ and it suffices to show that there exist two positive coroots $\gamma^\vee_1$ and $\gamma^\vee_2$  such that $(\lambda+\gamma_1^\vee)_{\dom}=(\lambda+\gamma_2^\vee)_{\dom}=\mu$, in which case we have $k_{-\gamma_1}\geq1$ and $k_{-\gamma_2}\geq1$. Clearly we can take $\gamma^\vee_1=\beta^\vee$. Note that $s_{\alpha_1}(\lambda)=\lambda$, and $s_{\alpha_1}(\alpha_2^\vee)=\alpha_1^\vee+\alpha_2^\vee$. Thus $(\lambda+\alpha_2^\vee)_{\dom}=\mu$ and we can take $\gamma_2^\vee=\alpha_2^\vee$.
\end{enumerate}\end{proof}

\subsection{} We now prove our main theorem which verifies a conjecture of Haines--Richarz \cite[Conjecture 5.4]{HainesRicharz}.

\begin{thm}\label{thm: Haines-Richarz}
Let $\Gr_{\calG}$ be the twisted affine Grassmannian associated to an absolutely special vertex $\fks_0$. Let $\mu\in X_*(T)^+_{I}$. Then the smooth locus of $\Gr_{\calG,\lleq\mu}$ is $\Gr_{\calG,\mu}$. 
\end{thm}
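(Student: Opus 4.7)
Since the smooth locus of $\Gr_{\calG,\lleq\mu}$ is open and $L^+\calG$-stable, its complement (the singular locus) is a closed $L^+\calG$-invariant subscheme, hence a union of closed Schubert subvarieties $\Gr_{\calG,\lleq\nu'}$; consequently the smooth locus is an upward-closed union of Schubert cells. It contains the open cell $\Gr_{\calG,\mu}$, and if the theorem fails it must contain some $\Gr_{\calG,\lambda}$ with $\mu\rightsquigarrow\lambda$ a minimal degeneration. The plan is to rule this out by proving the strict inequality
\[
\dim T_{t^\lambda}\Gr_{\calG,\lleq\mu}>\dim\Gr_{\calG,\lleq\mu}
\]
for every minimal degeneration $\mu\rightsquigarrow\lambda$.

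By Proposition \ref{prop: root curves min deg}, the root part of this tangent space in the decomposition \eqref{eqn: id tangent space t^l} already satisfies $\dim(T_{t^\lambda}\Gr_{\calG,\lleq\mu})^{\mathrm{root}}\geq\dim\Gr_{\calG,\lleq\mu}$, so it suffices to exhibit a single tangent vector in $T_{t^\lambda}\Gr_{\calG,\lleq\mu}$ with nonzero component in the Cartan summand $(\fkt_H\otimes u^{-1}\bbC[u^{-1}])^\sigma$. Examining the Stembridge cases in the proof of Proposition \ref{prop: root curves min deg}, one may choose a negative root $\alpha\in\Sigma^-$ with $k_\alpha^{(\lambda,\mu)}\geq 1$; by Proposition \ref{prop: root curve bound relation} the root curve $t^\lambda f_{\alpha-1}:\bbA^1\to\Gr_{\calG,\lleq\mu}$ then realizes $e_a\in T_{t^\lambda}\Gr_{\calG,\lleq\mu}$, where $a:=\psi_{\fks_0}(\alpha-1)\in\Phi_{\af}$.

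Next, let $h=\tilde{f}_b(1)$ be the element associated to $a$ in Proposition \ref{prop: Cartan direction}, so that $\mathrm{Ad}(h)e_a$ has nonzero Cartan component. A direct sign check using \S\ref{para: affine root gps} and \S\ref{para: echelonnage} shows that when $\fks_0$ is absolutely special, $b$ corresponds under $\psi_{\fks_0}$ to an element of $\Phi_{\af}(\Sigma)$ with non-negative coefficient, so $h\in L^+\calG$. Moreover, since $\alpha\in\Sigma^-$ and $\lambda$ is dominant, $\langle\lambda,\alpha\rangle\leq 0$, and equation \eqref{eqn: translation elt action} gives $t^\lambda h t^{-\lambda}\in L^+\calG$ as well. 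Consequently, the $L^+\calG$-translate
\[
\phi(x):=(t^\lambda h t^{-\lambda})\cdot t^\lambda f_{\alpha-1}(x)
\]
of the root curve $t^\lambda f_{\alpha-1}$ lies in $\Gr_{\calG,\lleq\mu}$ for every $x$, satisfies $\phi(0)=t^\lambda$, and has tangent vector at $0$ equal to $\mathrm{Ad}(h)e_a$ under the left-translation identification $T_{t^\lambda}\Gr_{\calG}\cong T_e\Gr_{\calG}$ (using that the isotropy representation of $L^+\calG\subset LG$ on $T_e\Gr_{\calG}$ is $\mathrm{Ad}$ modulo $\mathrm{Lie}(L^+\calG)$). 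This supplies the required extra Cartan direction.

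The hard part is precisely the verification that $h\in L^+\calG$, so that conjugation by $h$ preserves $\Gr_{\calG,\lleq\mu}$ and the perturbation of the root curve remains inside the Schubert variety. This is where the absolutely special hypothesis on $\fks_0$ is indispensable: at a special-but-not-absolutely-special vertex (the $C$-$BC_n$ case) the analogous element $h$ sits outside $L^+\calG$ and the perturbed curve may escape $\Gr_{\calG,\lleq\mu}$, in line with the existence of exotic good reduction in that case.
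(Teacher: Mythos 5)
Your proof is correct and is essentially the same as the paper's. The paper arrives at the extra Cartan direction by considering the action $\star$ of $\calG^{\lambda}=L^+\calG\cap t^\lambda L^+\calG\, t^{-\lambda}$ on $T_{t^\lambda}\Gr_{\calG,\lleq\mu}$ and computing $h'\star e_{\psi_{\fks_0}(-\alpha-1)}=\mathrm{Ad}(h)e_{\psi_{\fks_0}(-\alpha-1)}$, which is exactly the tangent vector to the perturbed curve $\phi$ you write down; so your ``$L^+\calG$-translate of a root curve'' is the same calculation in different clothing.

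One small inaccuracy in your closing remark: the element $h=\tilde f_{\psi_{\fks_0}(\alpha)}(1)$ (for the linear affine root $\psi_{\fks_0}(\alpha)$, $\alpha$ a positive $\Sigma$-root) always lies in $L^+\calG$, regardless of whether $\fks_0$ is absolutely special, since $\psi_{\fks_0}(\alpha)$ always has coefficient $0\ge 0$. The place where absolute speciality is really used is not to get $h\in L^+\calG$ but to ensure that $a=\psi_{\fks_0}(-\alpha-1)$ falls into case (1) or case (2b) of \S\ref{para: affine root gps}, i.e.\ that the relative root $\alpha'$ attached to $\psi_{\fks_0}(\alpha)$ satisfies $\tfrac12\alpha'\notin\Phi$; this is exactly the hypothesis under which Proposition~\ref{prop: Cartan direction} produces a nonzero Cartan component. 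At a special-but-not-absolutely-special vertex (the $C$-$BC_n$ case) the nearest negative affine root is of type (2a), which Proposition~\ref{prop: Cartan direction} does not cover, and the conjugation does not produce a Cartan direction. This is a side remark and does not affect the validity of the argument.
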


\begin{proof} Clearly $\Gr_{\calG,\mu}\subset \Gr_{\calG,\lleq\mu}$ is smooth. For the converse, note that the smooth locus is open, hence if $\lambda\lleq\mu$ is such that $\Gr_{\calG,\lambda}\subset \Gr_{\calG,\lleq\mu}^{\mathrm{sm}}$, then $\Gr_{\calG,\nu}\subset \Gr_{\calG,\lleq\mu}^{\mathrm{sm}}$ for $\lambda\lleq\nu\lleq\mu$. Thus it suffices to show for $\mu\rightsquigarrow\lambda$ a minimal degeneration that $\Gr_{\calG,\lambda}$ does not lie in the smooth locus of $\Gr_{\calG,\lleq\mu}$. Using the $L^+\calG$-action, it suffices to show $\Gr_{\calG,\lleq\mu}$ is singular at $t^\lambda$, i.e. that 
$$
\dim T_{t^\lambda}\Gr_{\calG,\lleq\mu}>\dim\Gr_{\lleq\mu}=\langle \mu, 2\rho\rangle.
$$ 
By Proposition \ref{prop: root curves min deg}, we have  
	$$
	\dim (T_{t^\lambda}\Gr_{\calG,\lleq\mu})^{\mathrm{root}}\geq \dim \Gr_{\calG,\lleq\mu}.
	$$
Thus it suffices to show  there exists $X\in T_{t^\lambda}\Gr_{\calG,\lleq\mu}$ with non-zero component in $(u^{-1}\bbC[u^{-1}]\fkt_H)^\sigma$.

Note that the restriction of the  $L^+\calG$ action on $\Gr_{\calG,\lleq\mu}$ to the subgroup $$\calG^\lambda:=\calG(\bbC[[t]])\cap t^\lambda \calG(\bbC[[t]])t^{-\lambda}$$  preserves the point $t^\lambda$. Thus we obtain an action $\star$ of $\calG^\lambda$ on $T_{t^\lambda}\Gr_{\calG,\lleq\mu}$. Under the identification \eqref{eqn: id tangent space t^l}, the action on the right is given by composing the natural adjoint action with the map $g\mapsto t^{-\lambda} g t^{\lambda}$.
Now since $\langle \mu,2\rho\rangle>\langle \lambda,2\rho\rangle$, we have $k_{-\alpha}\geq1$ for some $\alpha\in \Sigma^+$ and hence $e_{a}\in T_{t^\lambda}\Gr_{\calG,\lleq\mu}$ for $a=\psi_{\fks_0}(-\alpha-1)$.We set $h':=\tilde{f}_{\psi_{\fks_0}(\alpha+\langle\lambda,\alpha\rangle)}(1)\in \calG^\lambda$ and $h:=t^{-\lambda} h't^\lambda=\tilde{f}_{\psi_{\fks_0}(\alpha)}(1)$. We claim that
$$
X:=h'\star e_{\psi_{\fks_0}(-\alpha-1)}=\mathrm{Ad}(h)e_{\psi_{\fks_0}(-\alpha-1)}\in T_{t^\lambda}\Gr_{\calG,\lleq\mu}
$$
has non-zero component along $(u^{-1}\bbC[u^{-1}]\fkt_H)^\sigma$. 
 
 Note that since $\fks_0$ is absolutely special, we have $\psi_{\fks_0}(\alpha)=\alpha'\in \Phi_{\af}$ with $\alpha'\in \Phi$ a relative root and $\frac{1}{2}\alpha'\notin \Phi$ (cf. \S\ref{para: echelonnage}). Then 
 $$
 \psi_{\fks_0}(-\alpha-1)=\begin{cases}\alpha'-\frac{1}{d_{\alpha'}} &\text{if $2\alpha'\notin\Phi$,}\\
 2\alpha'-\frac{1}{2} &\text{otherwise.}\end{cases}$$
 
Then by Proposition \ref{prop: Cartan direction}, $X$ has a non-zero component in $(u^{-1}\bbC[u^{-1}]\fkt_H)^\sigma$ as desired.
\end{proof}
\begin{remark}\label{rem: general group}
A standard argument can be used to extend the theorem to a general reductive group $G$ over $\bbC\llps t\lrps$.	We write $G_{\ad}=\prod_{i=1}^r \mathrm{Res}_{F_i/F}G_i$, where $G_i$ is an adjoint absolutely simple group over a finite extension $F_i$ of $F$. In this case  a vertex $\fks_0$ in the building $\calB(G,F)$ for $G$ over $F$ is said to be absolutely special if it maps to an absolutely special vertex in $\calB(\mathrm{Res}_{F_i/F}G_i, F)\cong\calB(G_i,F_i)$ for all $i$. Then if $\calG/\bbC\lps t\rps$ corresponds to an absolutely special vertex $\fks_0$, any Schubert variety of $\Gr_{\calG}$ is isomorphic to a product of Schubert varieties in $\Gr_{\calG_i}$, where $\calG_i$ corresponds to the image of $\fks_0$ in $\calB(G_i,F_i)$. It follows that the smooth locus is the open stratum.
	
\end{remark}

\subsection{} Here we explain an alternative proof of Theorem \ref{thm: Haines-Richarz} that uses some extra algebro-geometric inputs, but which avoids the combinatorics in  Proposition \ref{prop: root curves min deg}.

Let $\mu\rightsquigarrow \lambda$ be a minimal degeneration and set $Y:=\Gr_{\calG,\mu}\cup \Gr_{\calG,\lambda}\subset \Gr_{\calG}$. Let $T':=T_H^\sigma$ be the fixed point scheme of  $T_H$ over $\bbC$ under the action of $\sigma$. Then $T'\subset L^+\calG$ and hence acts on $Y$. By  \cite[Theorem 4.2, Corollary 4.3]{BH}, the fixed point scheme $Y^{T'}$ 
has reduced locus $(Y^{T'})_{\rm red}$ which is $0$-dimensional and consists of just the points $t^\mu$ and $t^\lambda$.
 
 Suppose now that $Y$ is smooth.  Then $Y^{T'}$ is smooth by \cite{Iversen}, and hence reduced and equal to $\{t^\mu, t^\lambda\}$, so $t^\lambda$ is an isolated $T'$-fixed point in $Y$. Then, the tangent space
 $T_{t^\lambda}Y=T_{t^\lambda}\Gr_{\calG,\lleq\mu}$ is a $T'$-module whose weight zero direct summand $(T_{t^\lambda}Y)_0$ is trivial. This implies that $T_{t^\lambda}\Gr_{\calG,\lleq\mu}=(T_{t^\lambda}\Gr_{\calG,\lleq\mu})^{\rm root}$.
Now $\mu-\lambda=\beta^\vee$, for some $\beta\in \Sigma^+$,  and hence $k_{-\beta}^{(\lambda,\mu)}\geq 1$. Then as in the proof of Theorem \ref{thm: Haines-Richarz},  taking  $h'=\tilde{f}_{\psi_{\fks_0}(\beta+\langle\lambda,\beta\rangle)}(1)$,   we obtain an element 
$$
X=h'\star e_{\psi_{\fks_0}(-\beta-1)}\in T_{t^\lambda}\Gr_{\calG,\lleq\mu} 
$$
whose component in $(u^{-1}\bbC[u^{-1}]\fkt_H)^\sigma$ is non-trivial. Then $X\not\in (T_{t^\lambda}\Gr_{\calG,\lleq\mu})^{\rm root}$ which gives a contradiction.

 \subsection{Example: The ramified triality}
 We explain how our computations can be used to give a lower bound on the dimension of the tangent space at the worst stratum of the quasi-minuscule Schubert variety for the ramified triality. This recovers a result of Haines--Richarz  \cite[Theorem 8.1]{HainesRicharz}.
 
 In our notation we can take $H=\mathrm{Spin}_8$.  The absolute root system is of type $D_4$ with simple roots $\alpha_i, i=1,2,3,4$; here we use the notation from Bourbaki \cite{Bourbaki}. Then $\sigma_0$ fixes $\alpha_2$ and acts as the cycle $1\rightarrow 3\rightarrow 4\rightarrow 1$ on the other roots. The $\sigma_0$-orbits on positive roots can be listed as follows: 
 $$
 \{\alpha_2\}, \{\alpha_1,\alpha_3,\alpha_4\} , \{\alpha_1+\alpha_2,\alpha_2+\alpha_3,\alpha_2+\alpha_4\},
 $$
 $$
 \{\alpha_1+\alpha_2+\alpha_3,\alpha_2+\alpha_3+\alpha_4,\alpha_1+\alpha_2+\alpha_4\},
 $$
 $$
 \{\alpha_1+\alpha_2+\alpha_3+\alpha_4\}, \{\alpha_1+2\alpha_2+\alpha_3+\alpha_4\}.
 $$

 The simple roots for the \'echelonnage root system $\Sigma$ are the modified norms of the simple absolute roots, so a basis for $\Sigma$ is the pair $\alpha=\alpha_2,\beta:=\alpha_1+\alpha_3+\alpha_4$ which is of type $G_2$. The highest root is $\tilde{\alpha}=3\alpha+2\beta$.
 
 The coroots are given by 
 $$
 \alpha^\vee=\alpha_2^\vee,\quad \beta^\vee=\frac{\alpha_1^\vee+\alpha_3^\vee+\alpha_4^\vee}{3}
 $$
 and we have $\tilde{\alpha}^\vee=\alpha^\vee+\beta^\vee$. Now take $\mu=\tilde{\alpha}^\vee$ and $\lambda=0$.
 We obtain 6 tangent directions of $\Gr_{\calG,\lleq\mu}$ at the base point $\Gr_{\calG,\lambda}=\{e\}$ by using the root curves
 $$
 x\mapsto f_{\pm\beta-1}(x):=U_{\alpha_1}(u^{-1}x)U_{\alpha_3}(u^{-1}\zeta x)U_{\alpha_4}(u^{-1}\zeta^2 x),
 $$
 $$x\mapsto f_{\pm(\beta+3\alpha)-1}(x):=U_{\alpha_1+\alpha_2}(u^{-1}x)U_{\alpha_2+\alpha_3}(u^{-1}\zeta x)U_{\alpha_2+\alpha_4}(u^{-1}\zeta^2 x),$$
  $$x\mapsto f_{\pm(2\beta+3\alpha)-1}(x):=U_{\alpha_1+\alpha_2+\alpha_3}(u^{-1}x)U_{\alpha_2+\alpha_3+\alpha_4}(u^{-1}\zeta x)U_{\alpha_1+\alpha_2+\alpha_4}(u^{-1}\zeta^2 x),$$ where $\zeta$ is a third root of unity.
  
  Let $e_1,e_2,e_3,e_4$ be a basis for the Cartan $\fkt_H$  so that $e_i-e_{i+1}$ corresponds to the root $\alpha_i$. Then conjugating the root curve $x\mapsto f_{-\beta-1}(x)$ by $ h:=\tilde{f}_{\beta}(1)$ gives an extra tangent direction along the Cartan given by 
  $$
  u^{-1}((e_1 -e_2)+\zeta(e_3-e_4)+\zeta^2(e_3+e_4))\in (u^{-1}\fkt_H)^\sigma.
  $$In fact $(u^{-1}\fkt_H)^\sigma$ is 1-dimensional and the tangent direction along the Cartan given by conjugating the other two root curves above will be the same. Thus we get a lower bound of 7 for the dimension of the tangent space. On the other hand $\langle\mu,2\rho\rangle=6$, hence $\dim\Gr_{\calG,\lleq\mu}=6$. So, $\Gr_{\calG,\lleq\mu}$ is singular at the point $\Gr_{\calG,\lambda}=\{e\}$.

\bibliography{bibfile}

\bibliographystyle{amsalpha}

\end{document}